\renewcommand{\baselinestretch}{1.05}
\numberwithin{equation}{section}
\newtheorem{theorem}{Theorem}[section]
\newtheorem*{theorem*}{Theorem}
\newtheorem{corollary}[theorem]{Corollary}
\newtheorem{lemma}[theorem]{Lemma}
\newtheorem{definition}[theorem]{Definition}
\theoremstyle{remark}
\newtheorem{remark}[theorem]{Remark}
\newtheorem*{remark*}{Remark}
\newcommand{\bke}[1]{\left ( #1 \right )}
\newcommand{\bket}[1]{\left \{ #1 \right \}}
\newcommand{\norm}[1]{ \| #1  \|}
\newcommand{\abs}[1]{\left | #1 \right |}
\newcommand\al{\alpha}
\newcommand\de{\delta}
\newcommand\e {\epsilon}
\newcommand\la{\lambda}
\newcommand{\R}{\mathbb{R}}
\newcommand{\RR}{\mathbb{R}}
\newcommand{\Z}{\mathbb{Z}}
\renewcommand{\div}{\mathop{\rm div}}
\newcommand{\esssup} {\mathop{\rm ess\,sup}}
\newcommand{\CKN}{\text{\tiny CKN}}
\newcommand{\td}{\tilde}
\newcommand{\EQ}[1]{\begin{equation}\begin{split} #1 \end{split}\end{equation}}
\newcommand{\loc}{\mathrm{loc}}
\newcommand{\uloc}{\mathrm{uloc}}
\newcommand*{\dt}[1] {\accentset{\mbox{\large\bfseries .}}{#1}}
\renewcommand{\dot}{\dt}
\let\OLDthebibliography\thebibliography
\renewcommand\thebibliography[1]{
  \OLDthebibliography{#1}
  \setlength{\parskip}{1pt}
  \setlength{\itemsep}{2pt plus 0.3ex}
}
\begin{document}

\title{An $\epsilon$-regularity criterion and estimates of 
the regular set \\ for Navier-Stokes flows in terms of initial data}
\author{Kyungkeun Kang%
\thanks{Department of Mathematics, Yonsei University, Seoul 120-749,
South Korea. Email: \texttt{kkang@yonsei.ac.kr}}
\and
Hideyuki Miura%
\thanks{Department of Mathematical and Computing Science, Tokyo Institute of Technology, Tokyo 152-8551, Japan. Email: \texttt{miura@is.titech.ac.jp}}
\and
Tai-Peng Tsai%
\thanks{Department of Mathematics, University of British Columbia,
Vancouver, BC V6T 1Z2, Canada. Email:  \texttt{ttsai@math.ubc.ca}
}}
\date{}%
\maketitle
\begin{abstract}
We prove an $\epsilon$-regularity criterion for the 3D Navier-Stokes 
equations in terms of initial data. 
It shows that if a  scaled local $L^2$ norm of initial data is sufficiently small around the origin, a suitable weak solution is regular in a set enclosed by a paraboloid 
started from the origin. The result is applied to the estimate of 
the regular set for local energy solutions with initial data in 
weighted $L^2$ spaces. We also apply this result to studying  
energy concentration near a possible blow-up time and 
regularity of forward discretely self-similar solutions.
 
{\it Keywords}: Navier-Stokes equations, $\epsilon$-regularity, 
regular set
\end{abstract}

\section{Introduction}
\subsection{Regular set for suitable weak solutions}

We consider the regularity of weak solutions for the incompressible Navier-Stokes equations
\begin{align}\label{NS}\tag{\textsc{ns}}
\partial_tv-\Delta v +v\cdot\nabla v+\nabla p = 0,
\quad
\div v =0
\end{align}
associated with the initial value $v|_{t=0}=v_0$ with $\div v_0=0$.
 The global in time existence of weak solutions for finite energy
initial data data was proved by Leray \cite{Leray} and Hopf \cite{Hopf}.
Despite a lot of effort since their foundational work,
 the global regularity of the weak solutions remains a
 longstanding open problem.
After the pioneering work by Scheffer \cite{Scheffer77, Scheffer80},
 Caffarelli, Kohn, and Nirenberg \cite{CKN} established  local regularity theory for 
 \textit{suitable weak solutions} which are weak solutions satisfying
\textit{the local energy inequaltity}; see Section 2 for details.
As an application of their celebrated $\epsilon$-regularity criterion,
they showed the following result on the \textit{regular set}:

\medskip\noindent
{\bf Theorem~D\,\cite{CKN}}.
\textit{There exists $\epsilon_0>0$ such that if $v_0 \in L^2(\R^3)$
satisfies
\begin{equation}
\||x|^{-\frac12} v_0\|_{L^2}^2 =\epsilon <\epsilon_0
\label{0405},
\end{equation}
then there exists a suitable weak solution which is regular in the set
$\Pi_{\epsilon_0-\epsilon}$,
where
\begin{align*}
\Pi_{\delta}:=\left\{(x,t): t>\frac{|x|^2}{\delta}   \right\} \qquad \textit{for} \ \delta>0.
\end{align*}
}
\\
This theorem shows that smallness of initial data 
implies regularity of
the solution above a paraboloid with vertex at the origin.
There are at least two interesting features in this result:
No regularity condition (better than $L^2$) is assumed away from the origin and the regularity around the origin is propagated globally in time.
We also note that if the size of $v_0$ tends to 0, the regular set increases and invades a limit set $\Pi_{\epsilon_0}$.
This observation leads to the following questions:
\begin{enumerate}
\item[(a)]
 Can  the size of regular set $\Pi_{\delta}$ be enlarged?
\item[(b)]
 Can the condition \eqref{0405} of initial data be relaxed in terms of 
 regularity and smallness?
\end{enumerate}
The question (a) is addressed by D'Ancona and Luca \cite{DL},
where it is shown that there exists $\delta_0>0$ such that
if $v_0 \in L^2(\R^3)$ satisfies
\begin{equation}
\label{0406}
\||x|^{-\frac12}v_0\|_{L^{2}}< \delta_0 e^{-4L^2}
\end{equation}
for some $L>1$,  \eqref{NS} has a suitable weak solution
which is regular in the set
$$
\Pi_{L\delta_0}=\left\{(x,t): t>\frac{|x|^2}{L\delta_0}   \right\}.
$$
In particular,  the regular set  $\Pi_{L\delta_0}$ invades the whole half space
$\R^3 \times (0,\infty)$ when $v_0$ tends to zero,
though \eqref{0406} still assumes smallness of the data.
One of the goals of this paper is trying to answer questions (a) and (b)
by employing approach based on a framework of \textit{scaled local energy}
explained below.

\subsection{Main result on local-in-space regularity near initial time}
It is known from works \cite{FJR, Weissler, Kato84, GM} that for $v_0 \in L^q(\R^3)$ with $q \ge 3$, \eqref{NS} has a (unique) mild solution defined on some short time interval.
Motivated by the problem for constructing large forward self-similar solutions to \eqref{NS},
Jia and \v Sver\'ak \cite{JS} asked which condition this result can be localized in space. 
Let $B_r(x)=\{y \in \R^3:\, |x-y|<r\}$ and $B_r=B_r(0)$. Then
 their question can be stated as follows:
\begin{enumerate}
\item[(c)]
If $v_0$ is a general initial data for
which suitable weak solutions $v$ is defined and $v_0|_{B_2} \in L^q(B_2)$,
can we conclude that $v$ is regular in $B_1 \times [0, t_1)$ for some time $t_1 > 0$?
\end{enumerate}
Although non-local effect of the pressure might prevent the solution 
from having the same amount of the regularity
as the one for the heat equation, such effect is expected to be handled 
at least for a short time and $q \ge 3$. 
Indeed this question is settled affirmatively for the subcritical case $q>3$ in \cite{JS} and for the critical case $q=3$ 
in \cite{BP,KMT20}
\footnote{See also \cite{Tao} for the condition on the initial enstrophy and
\cite{BP} for further extension to the $L^{3,\infty}$ space and the critical Besov spaces.}.
Notice that the results for the critical case have some similarities
with Theorem D in \cite{CKN}.  Namely, the assumptions for the initial data
 ensure critical regularity at the origin and 
they lead to local-in-space regularity.
As the first main result of this paper, we present a new type of local-in-space regularity estimate, which guarantees regularity in the set
like $\Pi_{\delta}$.
In order to formulate it,  define \textit{the scaled local energy}
of the initial data by
$$
 N_0=N_0(v_0):= \sup_{r\in (0,1]} \frac{1}{r}\int_{B_r} \abs{v_0(x)}^2dx,
$$
which plays a central role in this paper.
\begin{theorem}
\label{thm:local}
Let $(v,p)$ be a suitable weak solution in $B_2\times (0, 4)$
 with the initial data $v_0 \in L^2(B_2)$ in the sense that
 $\lim_{t\rightarrow 0+}\|v(t)-v_0 \|_{L^2(B_2)}=0$.
 Assume that
\begin{equation}
\label{0509}
M:=\norm{v}^2_{L^{\infty}_t(0,4;L^2_x(B_2))} + \norm{\nabla v}^2_{L^2(B_2\times (0,4))}
+\norm{p}_{L^{\frac32}(B_2\times (0,4))}<\infty
\end{equation}
and  that
\begin{equation}
\label{0407}
 N_0 \le \epsilon_*.
\end{equation}
Then there exists $T=T(M) \ge \frac{c}{1+M^{18}}$ such that $v$ is regular in the set
\[
\Gamma=\bket{(x,t) \in B_1\times (0,1)\,: cN_0^2\abs{x}^2\le t<T}
\]
and satisfies
\[
\abs{v(x,t)}\le \frac{C}{t^{\frac{1}{2}}} \qquad  \mbox{for} \ (x,t) \in 
\Gamma,
\]
where $\epsilon_*$, $c$, and $C$ are positive absolute constants. 
\end{theorem}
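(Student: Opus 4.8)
The plan is to deduce the pointwise bound and regularity at an arbitrary point $(x_0,t_0)\in\Gamma$ from a Caffarelli--Kohn--Nirenberg type $\epsilon$-regularity condition, verified on a parabolic cylinder anchored at the initial time, with the required smallness supplied by $N_0$. First I would fix $(x_0,t_0)\in\Gamma$ and choose the scale $R=\sqrt{t_0}$, so that the cylinder $B_R(x_0)\times(0,t_0)$ reaches back to $t=0$ and its final time $t_0=R^2$ is comparable to $R^2$. Using $B_R(x_0)\subset B_{R+|x_0|}(0)$ together with the definition of $N_0$, the scaled local $L^2$ norm of the data obeys $\frac1R\int_{B_R(x_0)}|v_0|^2\,dx\le\bke{1+\tfrac{|x_0|}{R}}N_0$. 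The defining inequality $cN_0^2|x_0|^2\le t_0=R^2$ of $\Gamma$ is exactly $\tfrac{|x_0|}{R}\le\frac1{\sqrt c\,N_0}$, so this scaled local energy is at most $\bke{1+1/\sqrt c}N_0$; choosing the absolute constant $c\sim\epsilon_*^{-2}$ forces it below $\epsilon_*$. Thus the paraboloid $t\ge cN_0^2|x|^2$ is precisely the locus of points at which the natural scale $R=\sqrt t$ still sees the initial data as small.

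The core is then a local-in-space smoothing estimate at unit scale, which I would prove by the energy method. After the parabolic rescaling $\tilde v(y,s)=Rv(x_0+Ry,\,R^2s)$, which preserves $N_0$ and the class of suitable weak solutions and maps $(x_0,t_0)$ to $(0,1)$, it suffices to show that if $\frac1\rho\int_{B_\rho}|\tilde v_0|^2\le\epsilon_*$ and the rescaled solution satisfies the local energy inequality, then $\tilde v$ is regular near $(0,1)$ with $|\tilde v(0,1)|\le C$. For this I would test the local energy inequality against a cutoff $\phi$ supported near $B_1$ and propagate the smallness forward: the left side controls $\sup_s\int\phi|\tilde v|^2+\int\!\!\int\phi|\nabla\tilde v|^2$, while the right side is the small datum $\int\phi|\tilde v_0|^2\le\epsilon_*$ plus the cubic transport term and the pressure term. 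The cubic term is absorbed by interpolation between the energy and the dissipation (the sub-criticality that makes short-time propagation of smallness possible), and the pressure is split as $\tilde p=\tilde p_{\mathrm{loc}}+\tilde p_{\mathrm{har}}$, where $\tilde p_{\mathrm{loc}}$ is the Calder\'on--Zygmund potential of the local velocity, hence small with the energy, and $\tilde p_{\mathrm{har}}$ is harmonic on $B_1$ and estimated by interior bounds in terms of the global quantity $M$. Closing this differential inequality keeps the scaled local energy small on a time window, after which one upgrades the $L^\infty_sL^2_x\cap L^2_s\dot H^1_x$ smallness to smallness of the CKN quantity $\frac1{r^2}\int_{Q_r}(|\tilde v|^3+|\tilde p_{\mathrm{loc}}|^{3/2})$ on a parabolic sub-cylinder $Q_r$.

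With the CKN quantity below the universal threshold, the standard $\epsilon$-regularity theorem (as used for Theorem D) yields $\tilde v\in L^\infty$ near $(0,1)$ with $|\tilde v|\le C$; undoing the scaling gives $|v(x,t)|\le C/R=C/\sqrt t$ on the corresponding neighborhood, and letting $(x_0,t_0)$ range over $\Gamma$ produces the stated bound and regularity. The main obstacle, and the source of every non-absolute constant, is the control of the nonlocal pressure $\tilde p_{\mathrm{har}}$: it is not small and grows with $M$, so the propagation of smallness only closes on a finite time window, which after unwinding the rescaling becomes the upper cut-off $t<T(M)$. Tracking the powers of $M$ through the H\"older, Sobolev, and interior-harmonic estimates used to bound $\tilde p_{\mathrm{har}}$ and to reabsorb the cubic term is what yields the explicit lower bound $T(M)\ge c/(1+M^{18})$. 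A secondary technical point is uniformity: the estimate must close simultaneously for all $(x_0,t_0)\in\Gamma$, that is for all scales $R=\sqrt{t_0}\le\sqrt T$ and all admissible centers $x_0$ with $|x_0|<1$, so one must verify that $B_R(x_0)$ stays inside $B_2$ and that the harmonic-pressure bounds are uniform in the center, which is exactly where the hypothesis $M<\infty$ on the full cylinder $B_2\times(0,4)$ enters.
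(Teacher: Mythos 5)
Your skeleton matches the paper's: the geometric observation that on the paraboloid $cN_0^2|x|^2\le t$ the data has small scaled energy at scale $R=\sqrt{t_0}$ around $x_0$ (this is Lemma \ref{lemmaA} in the paper; note a small algebra slip---your bound is $N_0+c^{-1/2}$, not $(1+c^{-1/2})N_0$, which is why $c$ must be a large absolute constant), followed by propagation of smallness via the local energy inequality and an application of the Caffarelli--Kohn--Nirenberg criterion on a cylinder anchored at $t=0$. The velocity terms (linear, cubic-interpolation) are handled as in the paper.

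The genuine gap is in the pressure. You perform a single-scale decomposition $\tilde p=\tilde p_{\loc}+\tilde p_{\mathrm{har}}$ at the rescaled unit scale (scale $R=\sqrt{t_0}$ in original variables) and assert that $\tilde p_{\mathrm{har}}$ is ``estimated by interior bounds in terms of the global quantity $M$.'' This cannot close uniformly as $t_0\to0$. If the cutoff defining $\tilde p_{\loc}$ is at scale $O(R)$ (needed so that the Calder\'on--Zygmund part is controlled by the \emph{local} scaled energy), then the harmonic remainder is harmonic only on $B_{O(R)}(x_0)$, and the interior estimate gives $\sup_{B_{R/2}}|p_h|\le CR^{-2}\|p_h\|_{L^{3/2}(B_R)}$, so the scale-critical quantity $\frac1{R^2}\int_0^{R^2}\!\!\int_{B_R(x_0)}|p_h|^{3/2}$ is only bounded by $CM^{3/2}/R^2$, which degenerates; shrinking the time window to $t<T(M)$ does not remove the spatial factor $R^{-2}$. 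If instead the cutoff is at scale $O(1)$, the harmonic part is fine but the Calder\'on--Zygmund part is no longer controlled by the local energy near $x_0$ and blows up like $R^{-3/2}M^{3/2}$. The paper's proof of Theorem \ref{local data}(i) resolves this by estimating the scaled energy $E_r(t)$ \emph{simultaneously for all} $r\in[R,r_1]$: at small scales it takes $\rho=10C'r$ so that the harmonic-part comparison $\int_{B_{2r}}|p_h|^{3/2}\le C(r/\rho)^3\int_{B_\rho}|p_h|^{3/2}$ yields the prefactor $\frac{C'r}{\rho}\le\frac1{10}$ in front of $\frac1{\rho^2}\int_0^t\!\int_{B_\rho}|p|^{3/2}\le\mathcal E_{R,r_1}(t)$, which is then \emph{absorbed} into the left-hand side of the multiscale Gronwall inequality; the constant $M$ enters only at the top scales $r\sim r_1$ (Case II), and that alone produces the restriction $T\ge c/(1+M^{18})$. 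Without this absorption across scales (or an equivalent device), your differential inequality does not close, and note also that the CKN quantity you must verify involves the full pressure, not only $\tilde p_{\loc}$.
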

\begin{remark}(1)\,This theorem shows that smallness of the scaled energy implies regularity above a paraboloid for a short time.
It may be  viewed as an $\epsilon$-regularity criterion \textit{in terms of the initial data}. 
\\
(2)\,One can relate Theorem \ref{thm:local} to results in \cite{CKN, BP,KMT20}
by noting that
$$
N_0
\le
C \min \{ \|v_0\|^2_{L^3(B_1)}, \|v_0\|^2_{L^{2,-1}(B_1)}\},
$$
where
\begin{equation*}
\|v_0\|_{L^{2,\alpha}(\Omega)}:=\||x|^{\frac{\alpha}{2}}v_0\|_{L^2(\Omega)}
\end{equation*}
for $\alpha\in \R$ and $\Omega \subset \R^3$.
Thus \eqref{0407} holds if either $L^3$ norm or $L^{2,-1}$ norm is small in $B_1$.
Hence our theorem can be regarded as a local version of Theorem D
in \cite{CKN}.
\\
(3)\,By rescaling and translation, it is easy to see that 
if the data satisfies $\sup_{r\in (0,R]} \frac{1}{r}\int_{B_r(x_0)} |v_0|^2 \le \epsilon_*$ for some $x_0 \in B_2$ and $R>0$, then $v$ is regular in the set enclosed by a paraboloid 
with vertex at $(x_0,0)$ and the plane $t=TR^2$.
In particular, if  $\|v_0\|_{m^{2,1}(B_2)} \le \epsilon_*$,
there exists $T=T(M)>0$ such that the suitable weak solution is regular in $B_1 \times (0,T)$
and satisfies $|v(x,t)| \le \frac{C}{\sqrt{t}}$ in $B_1 \times (0,T)$.
Here $\|\cdot\|_{m^{2,1}}$ denotes the local Morrey norm:
$$
\|f\|_{m^{2,1}(\Omega)} :=\sup_{x_0 \in \Omega, \,r\in (0,1]}
\left( \frac{1}{r}\int_{B_r(x_0) \cap \Omega} \abs{f(x)}^2dx \right)^{\frac12}.
$$
By well-known relations $m^{2,1} \supset L^{3,\infty} \supset L^3$, we see that this extends
results of \cite{KMT20} ($L^3$ case) as well as \cite{BP} ($L^{3,\infty}$ case).
Recently it is shown in \cite[Theorem 1.2]{BT19} that 
the local energy solution is regular in time interval $(0,cR^2]$,
 provided
$\displaystyle{\sup_{x_0 \in \R^3, \,r\in (0,R]}}$$\frac{1}{r}\int_{B_r(x_0)} \abs{v_0}^2$ is small. 
It should be pointed out that the Morrey space 
is introduced in the context of the Navier-Stokes equations 
by Giga and Miyakawa \cite{GM89} 
for the study of vortex filaments in $\R^3$. Well-posedness of \eqref{NS} in the Morrey space $m^{2,1}$
(and its homogeneous version $M^{2,1}$) 
is studied by Kato \cite{Kato92} in $\R^n$ 
and Taylor \cite{Taylor} in compact Riemannian manifolds.
Recently, solutions from vortex filaments with arbitrary circulation
are constructed in
 \cite{BGHG}; see references therein for  development of this direction.

\end{remark}

\medskip

The proof of Theorem \ref{thm:local} is based on a local-in-space a priori estimate for the scaled energy of $(v,p)$ defined by
\begin{equation}
\label{scaled energy}
E_r(t):=\sup_{0<s<t}\frac{1}{r}\int_{B_r} \abs{v(s)}^2+\frac{1}{r}\int_0^t\int_{B_r} \abs{\nabla v}^2
+\frac{1}{r^2} \int_0^t\int_{B_{r}}\abs{p}^{\frac{3}{2}}.
\end{equation}
 Our strategy is partially inspired
by a uniformly local $L^2$ estimate established in 
the fundamental work
\cite{LR} of Lemari\'e-Rieusset.
However, in contrast to his estimate, our
a priori estimate guarantees scale-critical regularity at the origin
 so that the $\epsilon$-regularity criterion 
of \cite{CKN} can apply.
Furthermore, since we make assumptions only locally in space,
nontrivial modification is required to deal with the 
non-local effect of the pressure.
It should be noted that our strategy is also different from
 works \cite{JS, BP,KMT20}
and ours provides rather simple and transparent approach.
Indeed, in these previous works, the proof relied on decomposition of
the solution in $B_2$ into two parts;
one is the (regular) solution to \eqref{NS} for the data $\mathcal{B}(v_0|_{B_1}) \in L^q$ where
$\mathcal{B}$ is the
Bogovskii extension operator to $\R^3$, and the other is the solution to the perturbed Navier-Stokes equation for the data $v_0-B(v_0|_{B_1})$.
The main task in previous works is to show the regularity of the perturbed part,
and hence they developed local regularity theory for the perturbed Navier-Stokes equations.
In contrast, we estimate the solution of \eqref{NS} directly
without using such decomposition.

\subsection{Applications}
\subsubsection{Regular set}
We now return to the questions (a) and (b) concerning the regular set.
In order to state our result, it is natural and convenient to use
the notion of \textit{local energy solutions}
 introduced by Lemari\'e-Rieusset \cite{LR} and later slightly modified in \cite{KS, JS, BT19}.
The local energy solution is a suitable weak solution of \eqref{NS} defined in $\R^3$
which satisfies certain uniformly local energy bound and pressure representation;
see Definition \ref{def:localenergy} for the details.
In this context, let us recall the uniformly local $L^q$ spaces for $1\le q <\infty$.
We say $f\in L^q_{\uloc}$ if $f\in L^q_\loc(\R^3)$ and
\EQ{\label{Lq-uloc}
\norm{f}_{L^q_{\uloc}} =\sup_{x \in\R^3} \norm{f}_{L^q(B_1(x))}<\infty.
}
\medskip
Local in time existence of local energy solutions for initial data in $L^2_{\uloc}$
and also global existence for initial data in $E^2:=\overline{C_0^\infty}^{L^2_{\uloc}}$
are established in \cite{LR}. 
One of the advantage of the local energy solution
is it can be defined even for infinite energy data; see  \cite{KT, LR2, JST} and references therein
for further developments and its applications,
and \cite{MMP} for  the local energy solutions in the half space.
We also define the global version of the scaled energy by
\begin{equation*}
\dot{N}_0:=\sup_{r>0}\frac{1}{r}\int_{B_r} \abs{v_0(x)}^2dx
\end{equation*}
Here note that $\dot{N}_0$ is invariant under
the Navier-Stokes scaling:\,$u(x,t) \mapsto  u_\lambda(x,t):=\lambda u(\lambda x,\lambda^2 t)$.
The following result shows the estimates of the regular set for
the local energy solution for initial data
with small scaled energy and also about that for
 large data in $L^{2,-1}(\R^3)$:
\begin{theorem}
\label{thm:0430}
Let $(v,p)$ be a local energy solution in $\R^3 \times (0,\infty)$
for  the initial data $v_0 \in L^2_{\uloc}(\R^3)$.
\\
{\rm (i)}\,There exist  absolute constants $\epsilon_*$ and $c$ such that
if  $v_0$ satisfies
\begin{equation}
\label{decay'}
\sup_{x_0\in \R^3}\sup_{r\ge 1}\frac{1}{r}\int_{B_r(x_0)} 
\abs{v_0(x)}^2dx <\infty,
\end{equation}
and if
\begin{equation}
\label{03112'}
\dot{N}_0\le \epsilon_*,
\end{equation}
then $v$ is regular in the set
$$
\bket{(x,t) \in \R^3\times (0, \infty)\,:\,
c\dot{N}_0^2 \abs{x}^2 \le t}.
$$
\\
{\rm (ii)}\,For any $v_0 \in L^{2,-1}(\R^3)$ there exist
positive constants $T(v_0)$ and $c(v_0)$ such that $v$ is regular in the set
$$
\bket{(x,t) \in \R^3 \times (0,\infty)\,:\,c(v_0)|x|^2\le t<T(v_0)}.
$$
\end{theorem}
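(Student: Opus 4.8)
The plan is to derive both parts from Theorem~\ref{thm:local}, applied through its translated and rescaled form recorded in item (3) of the Remark following it. The only structural differences between the two parts are how the smallness condition \eqref{0407} is verified and whether the resulting conclusion is global or local in time; in both cases the role of Theorem~\ref{thm:local} is to convert a smallness statement for $v_0$ on a ball into regularity above a paraboloid.

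For part (i) the guiding idea is the scale-invariance of $\dot{N}_0$ together with the growth condition \eqref{decay'}. For $\lambda\ge 1$ set $v^\lambda(y,s)=\lambda v(\lambda y,\lambda^2 s)$, again a local energy solution, with data $v_0^\lambda(y)=\lambda v_0(\lambda y)$. Then $\dot{N}_0(v_0^\lambda)=\dot{N}_0\le\epsilon_*$ and hence $N_0(v_0^\lambda)\le \dot{N}_0(v_0^\lambda)\le\epsilon_*$, so \eqref{0407} holds for $v^\lambda$ on $B_2$. The crucial point is to bound $M$ in \eqref{0509} for $v^\lambda$ uniformly in $\lambda$: a change of variables shows $M(v^\lambda)$ is comparable to the scaled energy $E_{2\lambda}(4\lambda^2)$ of $(v,p)$, and the initial scaled energy is uniformly controlled since $\tfrac{1}{2\lambda}\int_{B_{2\lambda}}|v_0|^2\le A$ for $\lambda\ge1$, where $A:=\sup_{x_0,\,r\ge1}\tfrac1r\int_{B_r(x_0)}|v_0|^2<\infty$ by \eqref{decay'}. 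Granting a bound $M(v^\lambda)\le C(A)$, Theorem~\ref{thm:local} produces $T_*=T(C(A))>0$ \emph{independent of} $\lambda$, and $v^\lambda$ is regular on $\{(y,s)\in B_1\times(0,1): c\dot{N}_0^2|y|^2\le s<T_*\}$ (enlarging the paraboloid using $N_0(v_0^\lambda)\le\dot{N}_0$). Undoing the scaling, $v$ is regular on $\{c\dot{N}_0^2|x|^2\le t<\min(T_*,1)\lambda^2,\ |x|<\lambda\}$; since $\min(T_*,1)\lambda^2\to\infty$ and $\lambda\to\infty$, the union over $\lambda\ge1$ exhausts the whole paraboloid $\{c\dot{N}_0^2|x|^2\le t\}$, giving (i).

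The main obstacle is precisely the uniform-in-$\lambda$ bound $M(v^\lambda)\le C(A)$, i.e.\ an a priori estimate for $E_R(R^2)$ at \emph{all} scales $R=2\lambda\ge1$ in terms of $A$ alone. The initial scaled energy is uniformly bounded by $A$, and the task is to propagate this to positive times over the natural parabolic window of size $R^2$, using the local energy inequality and the local pressure representation, while controlling the convection and pressure contributions by powers of $A$ through the scale-invariant quantity $E_R$. This is the global (all-scale) counterpart of the local a priori estimate underlying Theorem~\ref{thm:local} and of Lemari\'e-Rieusset's uniformly local $L^2$ bound; making it uniform in the scale, which is exactly where \eqref{decay'} is needed, is the heart of the argument.

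For part (ii) no smallness of $v_0$ is assumed, but the weighted integrability $\||x|^{-\frac12}v_0\|_{L^2}^2=\int_{\R^3}|x|^{-1}|v_0|^2<\infty$ supplies smallness at the origin at small scales. Indeed, for $r\le R_0$ one has
\[
\frac1r\int_{B_r}|v_0|^2\le \int_{B_r}|x|^{-1}|v_0|^2\le \int_{B_{R_0}}|x|^{-1}|v_0|^2,
\]
and the right-hand side tends to $0$ as $R_0\to0$ by absolute continuity of the integral; hence one may fix $R_0=R_0(v_0)>0$ with $\sup_{r\in(0,R_0]}\tfrac1r\int_{B_r}|v_0|^2\le\epsilon_*$. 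Because $v$ is a local energy solution, $M$ on the fixed cylinder $B_{2R_0}\times(0,4R_0^2)$ is finite, so the rescaled Theorem~\ref{thm:local} applied at the origin at scale $R_0$ gives regularity on $\{c N_0^2|x|^2\le t<\min(T,1)R_0^2,\ |x|<R_0\}$ for the corresponding $T=T(v_0)>0$. Setting $T(v_0):=\min(T,1)R_0^2$ and $c(v_0):=\max\{cN_0^2,\min(T,1)\}$, any $(x,t)$ with $c(v_0)|x|^2\le t<T(v_0)$ automatically satisfies $|x|<R_0$ and lies in this set, which yields (ii). The only delicate point is that a local energy solution with the required finite local bounds is available for such data, which here is inherited from the standing hypothesis that $(v,p)$ is a local energy solution.
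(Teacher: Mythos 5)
Your part (ii) is essentially the paper's argument (absolute continuity of $\int |x|^{-1}|v_0|^2$ to get smallness at a scale $R_0$, then the rescaled local theorem at that scale), and your part (i) follows the same skeleton as the paper: rescale by $\lambda$, use scale-invariance of $\dot N_0$ for the smallness hypothesis, use \eqref{decay'} for a $\lambda$-uniform bound on the energy norm $M$, and take a union over $\lambda\to\infty$ to exhaust the paraboloid. However, part (i) has a genuine gap exactly where you flag it: you write ``granting a bound $M(v^\lambda)\le C(A)$'' and then describe the propagation of the initial scaled energy to positive times as ``the heart of the argument,'' without carrying it out. As written, the key quantitative step of part (i) is an unproven assertion, so the proof is incomplete.

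The gap is fillable, and not by a new all-scale a priori estimate of the kind you sketch, but by a one-line observation combined with a standing lemma for local energy solutions. Since $v^\lambda$ is again a local energy solution with data $v_0^\lambda(y)=\lambda v_0(\lambda y)$, and for $\lambda\ge 1$ one has
\begin{equation*}
\norm{v_0^\lambda}_{L^2_{\uloc}}^2=\sup_{x_0\in\R^3}\frac1\lambda\int_{B_\lambda(x_0)}|v_0|^2\,dx\le M_1
\end{equation*}
by \eqref{decay'}, the a priori bound of Lemma \ref{apriori uloc} (Lemari\'e--Rieusset's uniformly local energy estimate, which holds for \emph{every} local energy solution) gives $\esssup_t\|v^\lambda(t)\|^2_{L^2_\uloc}+\sup_{x_0}\int_0^t\int_{B_1(x_0)}|\nabla v^\lambda|^2\le 2M_1$ and $\sup_{x_0}\|p^\lambda-c_{x_0}\|_{L^{3/2}}\le C M_1$ on a time interval $T_{\uloc}=c_0/(1+M_1^2)$ that is uniform in $\lambda$; covering $B_2$ by unit balls yields precisely the uniform bound $M(v^\lambda)\le C(M_1)$ you need. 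Two further points you should address: (a) the hypothesis \eqref{0509} of Theorem \ref{thm:local} concerns $p$ itself, whereas for a local energy solution only $p-c_{x_0,r}(t)$ is controlled; one must observe that $(v^\lambda,p^\lambda-c(t))$ is still a suitable weak solution and apply the theorem to that pair (this is how the paper passes from Theorem \ref{local data} to Theorem \ref{large data}); (b) the time of regularity produced by Theorem \ref{thm:local} must not exceed the time $T_{\uloc}(M_1)$ on which the a priori bound holds, which is why the paper's intermediate statement carries a time restriction of the form $T_2=c/(1+\|v_0^\lambda\|_{L^2_\uloc}^{12})$, again uniform in $\lambda$ thanks to \eqref{decay'}. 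With these insertions your argument closes and coincides with the paper's proof of Theorem \ref{0311}(i).
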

Note that $\dot{N}_0 \le  \|v_0\|_{L^{2,-1}(\R^3)}^2$ holds
and that the condition \eqref{decay'} only assumes some mild decay of the data at infinity, and it is satisfied for the data in $L^2$ 
or even for  infinite energy data in the homogeneous 
Morrey space $M^{2,1}$ (see \eqref{04202}).
Therefore (i) relaxes the assumptions for the initial data
given in \cite{CKN, DL}. It also refines the convergence rate of the regular set to $\R^3 \times (0,\infty)$ in \cite{DL} as $\|v_0\|_{L^{2,-1}}$ tends to zero. 
Thus Theorem \ref{thm:0430} extends \cite[Theorem D]{CKN} and \cite{DL}. Moreover, \cite[Theorem D]{CKN} and \cite{DL} are existence results for initial data satisfying the conditions, while Theorem \ref{thm:0430} is a regularity result for \emph{any} solution for such data.

Global existence of local energy solutions for data 
satisfying \eqref{decay'} is proved in \cite{BT19}. 
See Theorem \ref{0311} for further results including \emph{eventual regularity} for $v_0 \in L^{2,-1}(\R^3)$ also satisfying
\eqref{decay'}.

\medskip

The following corollary concerns estimates of regular set 
for the data in the
weighted space $L^{2,\alpha}$ with $\alpha> -1$, which generalize  
the classical result \cite{Leray} of Leray and \cite[Theorem C]{CKN} 
for the cases $\alpha=0$ and $\alpha=1$, respectively. 
\begin{corollary}
\label{thm:weighted}
Let $(v, p)$ be a local energy solution for the initial data
in $L^2_{\uloc}(\R^3)$.
\\
{\rm (i)}~Assume that $v_0 \in L^{2,\alpha}(\R^3)$ for some $\alpha \ge 0$.
Then $v_0\in L^2(\R^3)$ and there is $K=K(\|v_0\|_{L^2}, \|v_0\|_{L^{2,\alpha}})$ such that  $v$ is regular in the set
$$
\bket{(x,t) \in \R^3 \times (0,\infty)\,:\, t\ge \min \{ K |x|^{-2\alpha}, C_0\|v_0\|_{L^2}^4 \} }.
$$
\\
{\rm(ii)}~Assume that $v_0  \in L^{2,\alpha}(\R^3)$ for some $\alpha \in (-1,0)$ and that \eqref{decay'} holds.
Then there exist $K=K(\|v_0\|_{L^{2,\alpha}})$ and $T=T(\|v_0\|_{L^{2,\alpha}})$
such that 
$v$ is regular in the set
$$
\bket{(x,t) \in \R^3 \times (0,\infty)\,:\, t\ge \max \{ K |x|^{-2\alpha}, T \} }.
$$
\end{corollary}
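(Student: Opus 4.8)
The plan is to read the threshold $K\abs{x}^{-2\al}$ in both parts as the \emph{localised eventual--regularity time at the scale $\abs{x}$}, to produce it from Theorem \ref{thm:0430}(i) by a rescaling together with a truncation, and to obtain the time floors $C_0\norm{v_0}_{L^2}^4$ and $T$ from genuine eventual regularity near the origin. The computation I would isolate first is the behaviour of the local $L^2$-energy under the weight: for $x_0$ with $R_0:=\abs{x_0}\ge2$, one has $\abs{x}^\al\simeq R_0^\al$ on $B_{R_0/2}(x_0)$ for every fixed $\al$, hence
\[
\eta_0:=\frac{1}{R_0}\int_{B_{R_0/2}(x_0)}\abs{v_0}^2\ \lesssim\ R_0^{-(1+\al)}\norm{v_0}_{L^{2,\al}}^2 .
\]
Because $\al>-1$, this $\eta_0$ is small once $R_0$ is large, and it is the only smallness the argument needs.

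For the far field in both parts I would fix such an $x_0$, rescale by $\la=R_0$ (so $x_0\mapsto y_0$, $\abs{y_0}=1$, $t\mapsto s=t/R_0^2$), and replace the rescaled datum $v_{0,\la}$ by its truncation $w_0:=v_{0,\la}\,\chi_{B_{1/2}(y_0)}$, which excludes the origin. Then $\norm{w_0}_{L^2}^2=\eta_0$, and since $w_0$ is supported at distance $\simeq1$ from the origin, $\dot N_0(w_0)\lesssim\eta_0$ while \eqref{decay'} holds trivially. Thus, for $R_0$ large enough that $\eta_0\le\epsilon_*$, Theorem \ref{thm:0430}(i) applies to the solution with datum $w_0$ and gives regularity on $\bket{(y,s):c\,\eta_0^2\abs{y}^2\le s}$, in particular at $y_0$ for $s\ge c\eta_0^2$. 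Undoing the scaling turns $s\ge c\eta_0^2$ into $t\ge cR_0^2\eta_0^2\lesssim R_0^{-2\al}\norm{v_0}_{L^{2,\al}}^4$, i.e.\ regularity for $t\ge K\abs{x}^{-2\al}$ with $K\simeq\norm{v_0}_{L^{2,\al}}^4$. Ranging over $x_0$ with $R_0$ large covers the far field, and the exponent $-2\al$ is exactly what $t=R_0^2s$ produces from $s\simeq\eta_0^2\simeq R_0^{-2(1+\al)}$.

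It remains to supply the floors and assemble the set. In part (i) I would first note $v_0\in L^2(\R^3)$: on $B_1$ finitely many unit balls and the $L^2_{\uloc}$ bound suffice, while on $B_1^c$ one has $\abs{x}^\al\ge1$ (as $\al\ge0$), so $\int_{B_1^c}\abs{v_0}^2\le\norm{v_0}_{L^{2,\al}}^2$; classical eventual regularity for finite energy then yields regularity on all of $\R^3$ for $t\ge C_0\norm{v_0}_{L^2}^4$. Since this holds everywhere while the far-field estimate holds for $R_0$ large, a point is regular once $t$ exceeds the \emph{smaller} of the two times (the branch $K\abs{x}^{-2\al}$ is binding precisely where $\abs{x}$ is large, where the far-field argument is valid), giving the $\min$. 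In part (ii) there is no global finite energy, so the origin is instead handled by eventual regularity on a fixed ball (Theorem \ref{0311}, valid under \eqref{decay'}), producing a floor $t\ge T=T(\norm{v_0}_{L^{2,\al}})$; here $\eta_0$ is small only for $R_0$ large, so the far-field estimate and the floor act on disjoint ranges of $\abs{x}$ and each supplies the \emph{larger} time in its range, giving the $\max$.

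The step I expect to be the crux is the truncation: the actual solution carries the full rescaled datum $v_{0,\la}$, not $w_0$, and \eqref{NS} has no finite speed of propagation, so one must show that deleting the datum outside $B_{1/2}(y_0)$ does not destroy the regularity obtained for the truncated problem. This is the problem of controlling the \emph{non-local pressure} generated by the far field on the working ball, uniformly in $R_0$, and it is the difficulty flagged in the introduction. Rather than compare two solutions through a perturbed equation, the cleanest route—consistent with the direct method used to prove Theorems \ref{thm:local} and \ref{thm:0430}—is to run the scaled-energy a priori estimate for $E_r$ directly at scale $R_0$, so that the pressure is handled exactly as in the proof of Theorem \ref{thm:0430} and the truncation serves only as the bookkeeping device that certifies $\dot N_0\lesssim\eta_0$. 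A final, routine point is fixing the constants so that the large-$R_0$ range and the floor range overlap, and gluing the countably many local statements into the single closed region in the statement, with $K$ and $T$ depending only on the indicated norms.
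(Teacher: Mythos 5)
Your single–scale computation $\eta_0\lesssim R_0^{-(1+\al)}\norm{v_0}_{L^{2,\al}}^2$ and your assembly of the floor and the far-field branch into the $\min$/$\max$ sets are the same ingredients the paper uses, and your final exponent $t\gtrsim |x_0|^{-2\al}\norm{v_0}_{L^{2,\al}}^4$ is correct. But the central step has a genuine gap that your own "crux" paragraph does not close. Theorem \ref{thm:0430}(i) is a regularity statement about \emph{the given solution} $v$ in terms of \emph{its} initial data; applying it to "the solution with datum $w_0$" produces a conclusion about a different solution, and there is no finite speed of propagation or uniqueness statement available to transfer that regularity back to $v$. Your proposed repair --- run the $E_r$ a priori estimate directly at scale $R_0$, with the truncation "only as bookkeeping" --- does not work as described, because the Gronwall argument behind Theorem \ref{large data} controls $\sup_{R\le r\le r_1}E_r(t)$ and therefore needs the datum term $\frac1r\int_{B_{2r}(x_0)}|v_0|^2$ to be below $\e_*$ for \emph{every} scale $r$ in $[R,r_1]$, not just at $r=R_0/2$. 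Since this quantity behaves like $\frac{C}{r|x_0|^{\al}}\norm{v_0}_{L^{2,\al}}^2$, it is largest at the \emph{smallest} admissible scale, so the relevant object is not $\eta_0$ but the threshold $R\sim |x_0|^{-\al}\norm{v_0}_{L^{2,\al}}^2/\e_*$ below which smallness fails; the regularity time at $x_0$ then comes from the $t\ge R^2$ branch of Theorem \ref{0311}(i) (the $R>0$ refinement of Theorem \ref{thm:0430}(i)), evaluated at the center, not from the paraboloid branch $c\dot N_0^2|y|^2\le s$ at distance one from an artificial truncation center. The multi-scale bound $\sup_{r\ge M|x_0|^{-\al}}\frac1r\int_{B_r(x_0)}|v_0|^2\le\e_*$ (together with its large-$r$ counterpart from $v_0\in L^2$ in part (i), resp.\ from $r^{-(1+\al)}\norm{v_0}_{L^{2,\al}}^2$ and \eqref{decay'} in part (ii)) is the actual content of the proof, and it never appears in your argument.

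Once you prove that multi-scale bound, the truncation and the rescaling become unnecessary: you simply translate to $x_0$ and invoke Theorem \ref{0311}(i) with $R=M|x_0|^{-\al}$, which is exactly what the paper does. Two smaller points: in part (i) you should justify "classical eventual regularity" for a general local energy solution (the paper instead gets the floor $t\ge \norm{v_0}_{L^2}^4/\e_*^2$ from Theorem \ref{0311}(i) itself, via $\sup_{x_0}\frac1r\int_{B_r(x_0)}|v_0|^2\le \frac1r\norm{v_0}_{L^2}^2\le\e_*$ for $r\ge R_*$, which avoids any weak--strong uniqueness issue); and in part (ii) you need the two admissible ranges of $r$ (namely $r\ge|x_0|/2$ and $C|x_0|^{-\al}\norm{v_0}_{L^{2,\al}}^2\le r\le|x_0|/2$) to overlap for $|x_0|$ large, which requires adjusting a constant as in the paper, rather than treating the floor and the far field on disjoint ranges of $|x|$.
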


\begin{remark}
(1)~Regularity near the initial time is not guaranteed in this case.
Note that the condition $v_0 \in L^{2,\alpha}$ for $\alpha> -1$ assumes 
faster decay at infinity than the case for $\alpha=-1$, 
but less regularity at the origin.
\\
(2)~Global existence of local energy weak solutions for data in $L^2_{\uloc} \cap L^{2,\beta}$ for $\beta \in (-2,0)$ is proved by %
\cite{FL}. 
See also more recent work \cite{BKT} for further generalization.
\end{remark}

\subsubsection{Energy concentration near a possible blow up time}
Regularity theory developed in this paper has applications to other problems as well. 
The following theorem shows concentration of the scaled local energy
in a shrinking ball near a possible blow-up time.
\begin{theorem}
\label{concentration}
Let $v$ be a local energy solution and $T_* \in (0,\infty)$ the maximal 
time so that $v \in C((0,T_*);L^\infty(\R^3))$. 
There exists $\epsilon_*>0$ such that the following holds:
\\
{\rm (i)}~There exist $S_0>0$ and $x(t) \in \R^3$ for $t \in (0,T_*)$
such that
\begin{equation}
\frac{1}{\sqrt{T_* -t}}\int_{B_{\sqrt{\frac{T_*-t}{S_0}}}(x(t))}\abs{v(x,t)}^2dx>\epsilon_*.
\label{04082}
\end{equation}
\\
{\rm(ii)}\,Suppose that $(x,t)=(0, T_*)$ is a singular point
and  that 
the type I condition
\begin{equation}
\label{0318}
\sup_{x_0 \in \R^3}\sup_{r \in (0,r_0]}\sup_{t\in (T_*-r^2, T_*)} \frac{1}{r}\int_{B_r(x_0)}\abs{v(x,t)}^2dx=:M_*<\infty
\end{equation}
holds for some $r_0>0$. Then, there exist $S=S(M_*)>0$ and $\delta_*>0$ such that
\begin{equation}
\label{0401}
\frac{1}{\sqrt{T_* -t}}\int_{B_{\sqrt{\frac{T_*-t}{S}}}}\abs{v(x,t)}^2dx>\epsilon_* \qquad \mbox{for}  \ t\in (T_*-\delta_*,T_*).
\end{equation}
\end{theorem}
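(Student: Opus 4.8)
The plan is to prove both parts as contrapositives of the local $\epsilon$-regularity criterion of Theorem \ref{thm:local}, used in the translated and rescaled form explained in the remark following it, exploiting that $T_*$ is the maximal time on which $v$ is bounded. The guiding principle is that uniform smallness of the scaled local energy at the parabolic scale $\rho=\sqrt{T_*-t}$ would propagate regularity forward for a further time of order $\rho^2$ and thus past $T_*$. The basic reduction is a change of variables: for a base point $x_0$, set $u(y,s)=\rho\,v(x_0+\rho y,\,t+\rho^2 s)$; then a direct computation identifies the concentration quantity in \eqref{04082} with the unit-scale energy $\int_{B_{1/\sqrt{S_0}}}|u(\cdot,0)|^2\,dy$ of the rescaled (local energy) solution, and the failure of concentration becomes smallness of this energy, uniformly in $x_0$.

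For part (ii) this strategy closes cleanly. Along $t_n\uparrow T_*$ with $\rho_n=\sqrt{T_*-t_n}$, set $u_n(y,s)=\rho_n v(\rho_n y,\,T_*+\rho_n^2 s)$, so that $u_n(\cdot,-1)=\rho_n v(\rho_n\cdot,t_n)$. The scale invariance of the Type I bound \eqref{0318} is exactly what forces the quantity $M$ in \eqref{0509} for each $u_n$ to satisfy $M\lesssim M_*$, uniformly in $n$; hence Theorem \ref{thm:local} provides a single regularity time $T(M_*)$ independent of $n$. Assuming \eqref{0401} fails along $t_n$ gives $\int_{B_{1/\sqrt{S}}}|u_n(\cdot,-1)|^2\le\epsilon_*$, and feeding this into the criterion (with the uniform constant $M_*$) yields regularity of $u_n$ in a fixed parabolic neighborhood of $(0,0)$, i.e. regularity of $v$ near $(0,T_*)$, contradicting that $(0,T_*)$ is singular. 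Choosing $S=S(M_*)$ to absorb $T(M_*)$ fixes the scale, and the center remains the origin precisely because the singularity is located there.

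Part (i) is the more delicate case, since no blow-up rate is assumed. Here the rescaled energy $M$ attached to $u(y,s)=\rho\,v(x_0+\rho y,\,t+\rho^2 s)$ need not be uniformly bounded as $t\uparrow T_*$ (the term $\frac1\rho\int_{B_{2\rho}}|v(\cdot,t)|^2$ is scale invariant but not a priori controlled without Type I), so one cannot simply quote a uniform $T(M)$. My plan is therefore to run the a priori estimate for the scaled energy $E_\rho$ in \eqref{scaled energy} directly: using the local energy inequality backward from time $t$ together with the regularity of $v$ on $(0,t)$, I would propagate parabolic-scale smallness across the intermediate sub-parabolic scales required by the multi-scale hypothesis $N_0\le\epsilon_*$, and thereby obtain regularity on a full interval $(t,\,t+c\rho^2]$ reaching past $T_*$. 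Choosing the center $x(t)$ as a near-maximizer of the parabolic-scale local energy and allowing $S_0$ to depend on the solution's local energy bound, the failure of \eqref{04082} at $x(t)$ contradicts the maximality of $T_*$.

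The main obstacle is Step of upgrading smallness from the single parabolic scale to all sub-parabolic scales, equivalently ensuring that the induced regularity time exceeds $T_*-t$ without a uniform bound on the rescaled $M$. In (ii) the Type I hypothesis resolves this on both counts: it caps the blow-up at the self-similar rate, so that concentration genuinely occurs at the scale $\sqrt{T_*-t}$ rather than a finer one, and it yields the uniform constant $M_*$ on which $S$ and $\epsilon_*$ depend. In (i), where this control is unavailable, I expect the essential work to be done by the local energy inequality and the backward-in-time estimate for $E_\rho$ that already power Theorem \ref{thm:local}, rather than by a black-box application of the theorem.
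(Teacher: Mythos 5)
Your overall skeleton --- proof by contradiction, parabolic rescaling $u(y,s)=\rho\, v(x_0+\rho y,\, t+\rho^2 s)$ with $\rho\sim\sqrt{T_*-t}$, and an $\epsilon$-regularity criterion in terms of the data at time $t$ used to push regularity past $T_*$ --- is the same as the paper's, but both parts have a concrete gap at the step where the criterion is invoked. For (ii): Theorem \ref{thm:local} requires $N_0\le\epsilon_*$, i.e.\ smallness of $\frac1r\int_{B_r}|u_n(\cdot,-1)|^2$ for \emph{all} $r\in(0,1]$. The negation of \eqref{0401} gives this only at a single scale $r\sim1$, and the type I bound \eqref{0318} gives only $\frac1r\int_{B_r}|v(t)|^2\le M_*$ (boundedness, not $\epsilon_*$-smallness) at finer scales; your claim that type I forces concentration to occur ``at the scale $\sqrt{T_*-t}$ rather than a finer one'' does not upgrade $M_*$ to $\epsilon_*$, so the hypothesis of Theorem \ref{thm:local} is simply not verified. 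The paper avoids this by using Theorem \ref{large data}\,(ii) with $R>0$: smallness of $N_R$ is required only for $r\in[R,1]$, at the price of concluding regularity only for $t\ge R^2$. Choosing $R^2=S=T_2(M_*)/2$ (with $T_2$ determined by $\|u(\cdot,0)\|_{L^2_\uloc}^2\le M_*$ coming from type I, and $\de_*=\min(Sr_0^2,T_*)$ so that $\la\le r_0$), the regularity window $[\la^2R^2,\la^2T_2)=[T_*-t_0,\,2(T_*-t_0))$ in original time reaches past $T_*$, giving the contradiction. You need this single-scale ($R>0$) version; this is exactly the refinement emphasized in Remark \ref{0510}.

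For (i), you declare it the delicate case and leave the key step as a hope (``propagate smallness across sub-parabolic scales''), but it is in fact the easier case once one observes that the negation of \eqref{04082} is uniform over all centers $x_0$ (since \eqref{04082} only asserts existence of \emph{some} $x(t)$): at some $t_1$ the scaled energy is $\le\epsilon_*$ in \emph{every} ball $B_{\sqrt{(T_*-t_1)/S_0}}(x_0)$, which after rescaling gives $\|u(\cdot,0)\|_{L^2_\uloc}^2\le\sqrt{S_0}\,\epsilon_*$, an absolute constant. Hence the time $T_2$ in Theorem \ref{large data}\,(ii) is absolute, $S_0=T_2/2$ can be taken absolute (not solution-dependent as you allow), no type I input and no multi-scale propagation are needed, and the same single-scale criterion with $R=\sqrt{S_0}$ applied at every center yields regularity of $v$ on $\R^3\times[T_*,2T_*-t_1)$, contradicting the maximality of $T_*$. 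Your proposed direct backward-in-time energy argument is not carried out and is unnecessary.
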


\begin{remark}(1)\, Part (i) of Theorem \ref{concentration} is a restatement of \cite[Theorem 8.2]{BT20} with slightly different conditions on $v$.
Our new proof is based on Theorem \ref{large data}.
\\
(2)\, There is a lot of literature on the behavior 
of the critical norm near the blow up time. However, 
to our knowledge, 
the behavior of the critical norm in shrinking balls has not 
been studied until recently. 
Li, Ozawa, and Wang \cite{LOW} showed concentration of 
$L^3$ norm at the blow up time in a ball $B(c\sqrt{T_*-t})$
along some sequence., i.e, they showed that there exist $t_n \uparrow T_*$ and $x_n \in \R^3$
such that 
$$
\|v(t_n)\|_{L^3(B_{c(T_*-t_n)}(x_n))} > \gamma_0
$$
holds with some absolute constants $c$, $\gamma_0>0$. 
In \cite[Corollary 1.1]{MMP2}, $L^3$ concentration in $B_{c(T_*-t)}(x(t))$
for \textit{all} $t <T_*$ is shown with some trajectory $\{x(t)\}_{t<T_*}$.
Statement (i) improves \cite[Corollary 1.1]{MMP2} since
the scaled energy is bounded by $L^3$ norm in $B_{\sqrt{\frac{T_*-t}{S_0}}}$.
We note that similar concentration phenomena are studied extensively
for nonlinear dispersive equations with $L^2$-critical nonlinearities; 
see \cite{MT, B, KPV}. 
\\
(3)\,This version of \textit{type I condition} \eqref{0318} is introduced in \cite{BP}.
It is implied from classical \textit{type I conditions}
such as $$
|v(x,t)| \le C(T_*-t)^{-\frac12}, \qquad \quad \ \quad
|v(x,t)| \le C|x|^{-1},$$
or the uniform Morrey bound $\sup_{t\in (0,T_*)}\|v(t)\|_{m^{2,1}}\le C < \infty$; see \cite{ZS, BP} for details.
In \cite{BP}, a concentration
of the $L^3$ norm at the singular point; i.e.,
$\|v(t)\|_{L^3(B_{\sqrt{\frac{T_*-t}{S'}}})} > \gamma_*$ with some
$\gamma_*>0$, $S'=S'(M_*)>0$ 
is shown under the condition \eqref{0318}.
From \eqref{0401} and the H\"{o}lder inequality, we can
also deduce the similar concentration estimate:
$$
\int_{B_{\sqrt{\frac{T_*-t}{S}}}}\abs{v(x,t)}^3 dx \ge
\sqrt{\frac{S}{T_* -t}}\int_{B_{\sqrt{\frac{T_*-t}{S}}}}\abs{v(x,t)}^2 dx
>\sqrt{S}\epsilon_*.
$$
\end{remark}
\subsubsection{Regularity of discretely self-similar solutions}
As another application of our result, we study regularity 
of discretely self-similar solutions. 
A solution $v$ defined in $\R^3 \times (0,\infty)$
is called (forward) self-similar (SS) if $v^\lambda(x,t)=v(x,t)$ for all $\lambda>0$ and is discretely self-similar with factor $\lambda$ (i.e.~$v$ is $\lambda$-DSS) if this scaling invariance holds for a given $\lambda>1$. Similarly, $v_0$ is self-similar (or~$(-1)$-homogeneous) if $v_0(x)=\lambda v_0(\lambda x)$ for all $\lambda>0$ or $\lambda$-DSS if this holds for a given $\lambda>1$.
 In \cite{JS},  Jia and \v Sver\'ak constructed self-similar solutions for H\"older continuous (away from the origin) initial data.
This result has been generalized in a number of directions  \cite{BT17,BT2,BT3,BT5, Chae-Wolf,KT-SSHS,LR2,Tsai}; see also the survey \cite{JST}. Among other results, Chae and Wolf  
\cite{Chae-Wolf} constructed  DSS solutions for any DSS data 
in $L^2_{\loc}$. Another construction was given by 
Bradshaw and Tsai \cite{BT5}, where the constructed solutions
satisfy the local energy inequality. 
The following result shows regularity 
of DSS solutions for data in $E^2=\overline{C^{\infty}_0}^{L^2_{\uloc}}$ provided $\lambda$ is close to 1.

\begin{theorem}
\label{thm:DSS}
{\rm (i)}~Let $\la>1$ and $v$ be a $\la$-DSS local energy solution of the Navier-Stokes equations \eqref{NS}
with $\la$-DSS data $v_0 \in E^2$.
Then 
\begin{equation}
\label{04202}
\|v_0\|_{M^{2,1}}:=\sup_{x_0 \in \R^3,\,r>0} \Big(
\frac{1}{r} \int_{B_r(x_0)} |v_0(x)|^2dx \Big)^{\frac12} <\infty
\end{equation}
holds and there exists $\mu \in (0,1)$ such that
\begin{equation}
\label{0419}
\sup_{x_0 \in \R^3,\,r\in(0,\mu|x_0|] }
\frac{1}{r}\int_{B_r(x_0)}|v_0(x)|^2 dx
\le
\epsilon_*.
\end{equation}
 Moreover $v$ is regular  in the set
$$
\bket{(x,t) \in \R^3\times (0, \infty)\,:\,
0  < t \le  T_2(\|v_0\|_{M^{2,1}})\mu^2\abs{x}^2}.
$$
\\
{\rm (ii)}~For any $\mu \in (0,1)$, there exists
$\lambda_*=\lambda_*(\mu)\in (1,2)$ such that if any
$\lambda$-DSS  data $v_0\in E^2$
with factor $\lambda \in  (1,\lambda_*]$ satisfies \eqref{0419}, then the $\lambda$-DSS local energy solution $v$ is regular in $\R^3\times (0,\infty)$ and 
\[
\abs{v(x,t)}\le\frac{C}{\sqrt{t}}\quad   {\rm in} \ \R^3\times (0,\infty)
\]
holds for a positive constant $C=C(v_0)$.
\end{theorem}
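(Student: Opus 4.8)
The plan is to handle the two parts separately: part (i) reduces to a pointwise application of Theorem \ref{thm:local}, while part (ii) is a blow-up/compactness argument whose punchline is the Caffarelli--Kohn--Nirenberg bound on the singular set. The organizing principle throughout is that the scaled quantity $A(r,x_0):=\frac1r\int_{B_r(x_0)}|v_0|^2$ is invariant under the discrete symmetry $(r,x_0)\mapsto(\lambda r,\lambda x_0)$, which follows by a direct change of variables from $v_0(x)=\lambda v_0(\lambda x)$. Thus every pair $(r,x_0)$ may be rescaled so that the radius lies in the fundamental range $[1,\lambda)$, at the cost of replacing the center by $\lambda^k x_0$. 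For \eqref{04202} I would combine this with $v_0\in E^2\subset L^2_{\uloc}$: a ball of radius in $[1,\lambda)$ is covered by a fixed number $N$ of unit balls, so it carries mass $\le N\|v_0\|_{L^2_{\uloc}}^2$, and the scaling reduces an arbitrary $A(r,x_0)$ to such a ball, giving $\|v_0\|_{M^{2,1}}^2\le N\|v_0\|_{L^2_{\uloc}}^2<\infty$.

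The crux of part (i) is the local smallness \eqref{0419}, which I would derive from the averaged decay $\lim_{|y|\to\infty}\int_{B_1(y)}|v_0|^2=0$ enjoyed by every $E^2$ function (approximate $v_0$ in $L^2_{\uloc}$ by $\phi\in C_0^\infty$ and note that a far ball misses $\supp\phi$). The point is that the scaling invariance transports this decay-at-infinity to smallness-at-small-scale: for $x_0$ in the fundamental annulus and small $r$, choose $k$ with $\lambda^k r\in[1,\lambda)$, so $A(r,x_0)=A(\lambda^k r,\lambda^k x_0)$ is the mass of a bounded-radius ball whose center $\lambda^k x_0$ has $|\lambda^k x_0|\gtrsim 1/r\to\infty$; the $E^2$ decay then forces $\sup_{|x_0|\in[1,\lambda)}A(r,x_0)\to0$ as $r\to0$. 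Choosing $r$ small enough to make this $\le\epsilon_*$ and setting $\mu$ accordingly (then using the scaling to reach all $x_0$) yields \eqref{0419}. With \eqref{0419} available I would apply Theorem \ref{thm:local} at each vertex $(x_0,0)$ at scale $R=\mu|x_0|$ via Remark (3): the rescaled data has $N_0=\sup_{r\le\mu|x_0|}A(r,x_0)\le\epsilon_*$, and the rescaled energy $M$ in \eqref{0509} is bounded by some $M_0=M_0(\|v_0\|_{M^{2,1}})$ uniformly in $x_0$, since the scaled energy of a local energy solution is controlled by the data and is invariant under the DSS rescaling. Taking $x_0=x$ gives regularity and $|v(x,t)|\le Ct^{-1/2}$ for $0<t<T_2\mu^2|x|^2$ with $T_2=\min\{T(M_0),1\}$, as claimed.

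For part (ii), $\mu$ is prescribed and \eqref{0419} is assumed; a Whitney-type covering of $B_r(x_0)\setminus\{0\}$ by bounded-overlap balls $B_{c\mu|y|}(y)$ converts \eqref{0419} into the \emph{uniform} bound $\|v_0\|_{M^{2,1}}^2\lesssim\epsilon_*/\mu^2$, so part (i) furnishes paraboloid regularity with $T_2$ \emph{independent of $\lambda$}. I would then argue by contradiction: suppose there are $\lambda_n\downarrow1$ and $\lambda_n$-DSS solutions $v_n$ obeying \eqref{0419} yet singular somewhere. Using the DSS time-scaling I normalize a singular point to $t_n\in[1,\lambda_n^2)$; since it lies above the paraboloid it is confined to the fixed compact set $\{|x|\le 2/(\mu\sqrt{T_2}),\ 1\le t<4\}$. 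The uniform $L^2_{\uloc}$ and local-energy bounds give compactness (strong $L^3_{\loc}$ convergence via Aubin--Lions together with the equation), and the limit $v_\infty$ is a local energy solution which is \emph{self-similar}: choosing $k_n$ with $\lambda_n^{k_n}\to\lambda$ and passing to the limit in the iterated DSS relation shows $v_\infty$ is invariant under the Navier--Stokes scaling for every $\lambda>0$.

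The obstruction — and the main difficulty — is then the rigidity of the self-similar limit. A self-similar suitable weak solution cannot be singular for $t>0$: its singular set $S$ is invariant under all parabolic dilations, so if it met $\{t>0\}$ it would contain a dilation orbit $\{(\lambda x_0,\lambda^2 t_0):\lambda>0\}$ (if $x_0\ne0$) or the ray $\{0\}\times(0,\infty)$ (if $x_0=0$), each of positive one-dimensional parabolic measure, contradicting $\mathcal{P}^1(S)=0$ from \cite{CKN}. Hence $v_\infty$ is regular at the limit point, so the scale-$r$ quantity $r^{-2}\int_{t_0-r^2}^{t_0}\int_{B_r(x_0)}(|v_\infty|^3+|p_\infty|^{3/2})$ is small at some $r$; by the strong local convergence this same quantity for $v_n$ at $(x_n,t_n)$ is small for large $n$, so $\epsilon$-regularity makes $v_n$ regular there — a contradiction. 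Global regularity follows, and $|v(x,t)|\le Ct^{-1/2}$ then results from the DSS scaling applied to the bound on a compact fundamental time-slab. I expect the two genuinely delicate points to be making Step 2 uniform in $x_0$ and, above all, the compactness in Step 4: one must verify that the local energy inequality and the DSS relation both pass to the self-similar limit and that the convergence is strong enough to transfer the $\epsilon$-regularity criterion.
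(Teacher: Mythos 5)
Your part (i) is essentially the paper's own argument: the same reduction of an arbitrary scale to a fundamental range via the DSS invariance of $\frac1r\int_{B_r(x_0)}|v_0|^2$, the same use of the $E^2$ decay at infinity to produce \eqref{0419} with $\mu\sim(\lambda R_*)^{-1}$, and the same pointwise application of the local regularity theorem at each rescaled vertex $(x_0,0)$ (the paper invokes Theorem \ref{large data}(ii) rather than Theorem \ref{thm:local}, which packages exactly the uniform-in-$x_0$ bound $M_0(\|v_0\|_{M^{2,1}})$ you describe). Part (ii) is where you genuinely diverge. The paper's proof is elementary and quantitative: it localizes the solution to the compact slab $\{t\ge C|x|^2,\ 1\le t\le\lambda^2\}$, uses the classical energy inequality to find a slice $t_*$ at which $\td v(t_*)\in H^1$, gets regularity on $[t_*,t_*+\delta]$ from short-time strong solutions plus weak--strong uniqueness with $\delta$ controlled by the data (hence by $\mu$), and then lets the DSS log-periodicity propagate regularity over a full period once $\lambda^2-1$ is small relative to $\delta$. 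Your route --- blow-up along $\lambda_n\downarrow 1$, compactness of local energy solutions, a self-similar limit, and rigidity of that limit by playing the dilation-invariance of its singular set against $\mathcal{P}^1(S)=0$ from \cite{CKN} --- is a valid alternative and conceptually attractive (it identifies a self-similar singularity as the sole obstruction as $\lambda\to1$), and your Hausdorff-measure computation for the dilation orbit and the Whitney bound $\|v_0\|^2_{M^{2,1}}\lesssim\epsilon_*/\mu^2$ are both correct. The cost is heavier machinery: you need the compactness theory for suitable weak solutions including convergence of the local pressure, and the ``persistence of singular points'' step is delicate because $\frac{1}{r^2}\iint_{Q_r}|p|^{3/2}$ need not be small at a regular point without a further harmonic/Calder\'on--Zygmund splitting of $p$; these ingredients are standard and citable (e.g.\ along the lines of \cite{JS13}), but your argument produces no explicit $\lambda_*(\mu)$, whereas the paper's mechanism does in principle.
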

\begin{remark} 
The statement (ii) extends \cite[Corollary 1.5 (ii)]{KMT20} on regularity of DSS solutions for data in $L^{3,\infty}$.
We note that \eqref{04202} was already proved in \cite[Lemma 6.1]{BT19}, where it was shown that for $\lambda$-DSS data,
\eqref{04202} holds if and only if $v_0 \in L^2_{\uloc}(\R^3)$.
On the other hand, \eqref{0419} does not hold for general $v_0 \in L^2_{\uloc}$. See Remark \ref{example} for details.
\end{remark}

\subsection{Outline of the paper and notation}

In section \ref{preliminaries}, we introduce the notion of local energy weak solutions and 
recall the local regularity criterion due to \cite{CKN} 
as well as some technical lemmas.
Section \ref{main results} is devoted to stating and proving our main results including Theorem \ref{thm:local}.
In section \ref{applications}, as applications, local regularity, regular set and a blow-up criterion
are analyzed in terms of scaled local $L^2$ energy. Regularity of discretely self-similar solutions is also studied for locally $L^2$ data.

Throughout this paper, $C \in (0,\infty)$ 
denotes an absolute constant which may change line by line.

\section{Preliminaries}
\label{preliminaries}
In this section, we recall some notions about the weak solution to
\eqref{NS} and some results such as the $\epsilon$-regularity theorems
and a priori estimates for the solutions.

For any domain $\Omega \subset \R^3$ and open interval
$I \subset (0,\infty)$, we say $(v,p)$ is a
suitable weak solution in $\Omega \times I$
if
it satisfies \eqref{NS} in the sense of distributions in
$\Omega \times I$,
\begin{align*}
v \in L^\infty(I; L^2(\Omega)) \cap L^2(I; {\dot H}^1(\Omega)), \quad  p \in L^{3/2}(\Omega \times I),
\end{align*}
and the local energy inequality:
\EQ{\label{CKN-LEI}
&\int_{\Omega} |v(t)|^2\phi(t) \,dx +2\int_0^t\!\! \int_{\Omega} |\nabla v|^2\phi\,dx\,dt
\\&\leq %
\int_0^t\!\!\int_{\Omega} |v|^2(\partial_t \phi + \Delta\phi )\,dx\,dt +\int_0^t\!\!\int_{\Omega} (|v|^2+2p)(v\cdot \nabla\phi)\,dx\,dt
}
for  all non-negative $\phi\in C_c^\infty ( \Omega \times I)$.
Note that no boundary condition is required.

We next define the notion of local energy solutions,
The following definition is formulated in \cite{BT19}, which is slightly revised
from the notions of the local Leray solution defined in \cite{LR},
the local energy solution in \cite{KS} and the Leray solution in \cite{JS}.
We refer to \cite[Section 2]{KMT20} for discussion of their relation.

\begin{definition}[Local energy solutions \cite{BT19}]\label{def:localenergy}
A vector field $v\in L^2_{\loc}(\R^3\times [0,T))$ is a local energy solution to \eqref{NS} with divergence free initial data $v_0\in L^2_{\uloc}$ if
\begin{enumerate}
\item for some $p\in L^{3/2}_{\loc}(\R^3\times [0,T))$, the pair $(v,p)$ is a distributional solution to \eqref{NS},

\item for any $R>0$,

\begin{equation}\label{uniform-energy}
\esssup_{0\leq t<\min \{R^2,T \}}\,\sup_{x_0\in \R^3}\, \int_{B_R(x_0 )} |v|^2\,dx
+ \sup_{x_0\in \R^3}\int_0^{\min\{ R^2, T\}}\!\!\!\int_{B_R(x_0)} |\nabla v|^2\,dx \,dt<\infty,
\end{equation}

\item for all compact subsets $K$ of $\R^3$ we have $v(t)\to v_0$ in $L^2(K)$ as $t\to 0^+$,

\item $(v,p)$ satisfies the local energy inequality
\eqref{CKN-LEI}
for  all non-negative functions $\phi\in C_c^\infty (Q)$
with all cylinder $Q$ compactly supported in $\R^3 \times (0,T)$,
\item for every $x_0\in \R^3$ and $r>0$, there exists $c_{x_0,r}\in L^{3/2}(0,T)$ such that
\EQ{\label{decomposition}
        p (x,t)-c_{x_0,r}(t)&=\frac 1 3 |v(x,t)|^2 +\int_{B_{3r}(x_0)} K(x-y):v(y,t)\otimes v(y,t)\,dy
        \\&+\int_{\R^3\setminus B_{3r}(x_0)} (K(x-y)-K(x_0-y)):v(y,t)\otimes v(y,t)\,dy
}
in $L^{3/2}(B_{2r}(x_0) \times (0,T))$, where $K(x)= \nabla^2(\frac{1}{4\pi|x|})$, and

\item for any compact supported functions $w \in L^2(\R^3)^3$,
\EQ{\label{weak-continuity}
\text{the function}\quad
t \mapsto \int_{\R^3} v(x,t)\cdot w(x)\,dx \quad \text{is continuous on }[0,T).
}

\end{enumerate}
\end{definition}

The next lemma shows  a priori bounds for the
local energy solution, where the crucial part is proved by Lemari\'e-Rieusset~\cite{LR}
and later revised in \cite{KS,JS13}.
The following version is deduced from \cite{KMT20, BT19}.
\begin{lemma}[a priori bound of local energy solutions]
 \label{apriori uloc}
Suppose  $(v,p)$ is a local energy solution to \eqref{NS} with divergence free initial data $v_0\in L^2_{\uloc}$.
For any $s,q>1$ with   $\frac 2{s}+ \frac 3{q} = 3$,
there exist  $C(s,q)>0$ and $c_{x_0}(t) \in \R^3$
such that
\begin{align}
\label{ineq.apriorilocal}
E_{\uloc}(t):=\esssup_{0\leq s \leq t}
\|v(t)\|_{L^2_{\uloc}}^2
+
\sup_{x_0\in \RR^3}\int_0^t\!\int_{B_1(x_0)} |\nabla  v|^2 &\le
2 \|v_0\|_{L^2_{\uloc}}^2,
\\
\label{p.apriori}
\sup_{x_0 \in \R^3}  \norm{p -c_{x_0}}_{L^s(0,t; L^q(B_1(x_0)))}
&\le C(s,q) \|v_0\|_{L^2_{\uloc}}^2
\end{align}
for $t\le T_{\uloc} :=\frac{c_0}{1+\|v_0\|_{L^2_{\uloc}}^4}
$
with a universal constant $c_0>0$.
Similar estimates with $B_1$ replaced by $B_r$ are valid.
\end{lemma}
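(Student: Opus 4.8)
The plan is to run the localized energy method of Lemari\'e-Rieusset: test the local energy inequality \eqref{CKN-LEI} against a fixed spatial cutoff, bound the cubic and pressure terms by interpolation together with the pressure decomposition \eqref{decomposition}, and then close a continuity argument on the short interval $[0,T_{\uloc}]$.

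First I would fix $x_0\in\R^3$ and choose a time-independent cutoff $\phi=\phi_{x_0}\in C_c^\infty(B_2(x_0))$ with $\phi\equiv 1$ on $B_1(x_0)$, $0\le\phi\le 1$, and $|\nb\phi|+|\nb^2\phi|\lesssim 1$. Applying \eqref{CKN-LEI} with $\phi^2$ between times $t_1<t_2$ and using the strong convergence $v(t)\to v_0$ in $L^2_{\loc}$ (item (3) of Definition \ref{def:localenergy}) to send $t_1\to 0^+$, one obtains
\[
\int |v(t)|^2\phi^2 + 2\int_0^t\!\!\int |\nb v|^2\phi^2
\le \int |v_0|^2\phi^2 + \int_0^t\!\!\int |v|^2\,\Delta(\phi^2)
+ \int_0^t\!\!\int (|v|^2+2p)\,(v\cdot\nb\phi^2).
\]
Set $a(t):=\sup_{x_0}\big(\sup_{0\le s\le t}\int |v(s)|^2\phi_{x_0}^2 + \int_0^t\!\int |\nb v|^2\phi_{x_0}^2\big)$, which is comparable to $E_{\uloc}(t)$ since $\phi_{x_0}\equiv1$ on $B_1(x_0)$ and each $B_2(x_0)$ is covered by boundedly many unit balls. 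The goal is a closed inequality for $a$.

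The linear term obeys $\int_0^t\!\int|v|^2\Delta(\phi^2)\lesssim t\,a(t)$. For the cubic term I would use the three-dimensional Gagliardo–Nirenberg bound $\|v\|_{L^3(B_2(x_0))}^3\lesssim \|v\|_{L^2(B_2(x_0))}^{3/2}\big(\|\nb v\|_{L^2(B_2(x_0))}+\|v\|_{L^2(B_2(x_0))}\big)^{3/2}$ and integrate in time with H\"older, producing $\int_0^t\|v\|_{L^3(B_2(x_0))}^3\lesssim (t^{1/4}+t)\,a(t)^{3/2}$. For the pressure term, since $\div v=0$ gives $\int v\cdot\nb\phi^2=0$, I may replace $p$ by $p-c_{x_0}(t)$ and insert the decomposition \eqref{decomposition} centered at $x_0$ (with $r=2$): the $\tfrac13|v|^2$ piece merges into the cubic estimate, the local Calder\'on–Zygmund part is controlled in $L^{3/2}(B_2(x_0))$ by $\|v\|_{L^3(B_6(x_0))}^2$ and contributes like the cubic term, while the far part is handled by the kernel-difference estimate $|K(x-y)-K(x_0-y)|\lesssim |x-x_0|\,|x_0-y|^{-4}$; summing over dyadic annuli $|y-x_0|\sim 2^j$ yields $\sum_j 2^{-j}\|v\|_{L^2_{\uloc}}^2\lesssim a(t)$, so this part contributes $\lesssim t\,a(t)^{3/2}$ as well. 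Collecting everything,
\[
a(t)\le \|v_0\|_{L^2_{\uloc}}^2 + C\,(t^{1/4}+t)\,\big(a(t)+a(t)^{3/2}\big).
\]

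Finally I would close by continuity. With $N:=\|v_0\|_{L^2_{\uloc}}^2$ and $a(0)=N$, as long as $a(t)\le 2N$ the right-hand side is at most $N+C(t^{1/4}+t)\big(2N+(2N)^{3/2}\big)$, which is $\le 2N$ provided $t\le c_0(1+N)^{-2}\sim (1+\|v_0\|_{L^2_{\uloc}}^4)^{-1}$; this gives \eqref{ineq.apriorilocal} on $[0,T_{\uloc}]$. The pressure bound \eqref{p.apriori} then follows by feeding the established energy control into \eqref{decomposition}: on the scaling line $\frac2s+\frac3q=3$ one has $v\in L^{2s}_t L^{2q}_x$ uniformly in $x_0$, so Calder\'on–Zygmund bounds the local part of $p-c_{x_0}$ in $L^s_tL^q(B_1(x_0))$ by $\|v\|_{L^{2s}_tL^{2q}}^2\lesssim N$ while the far part is $\lesssim \|v\|_{L^2_{\uloc}}^2\lesssim N$. \textbf{The main obstacle} is the nonlocal pressure: one must make every estimate uniform in $x_0$, carry out the dyadic summation for the far field carefully, and track the interpolation exponents so that the closing time carries exactly the power $\|v_0\|_{L^2_{\uloc}}^4$.
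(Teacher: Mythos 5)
The paper does not actually prove this lemma: it is quoted from Lemari\'e-Rieusset \cite{LR} (as revised in \cite{KS,JS13} and deduced in \cite{KMT20,BT19}), so there is no in-paper argument to compare against line by line. Your proposal reconstructs exactly the standard route behind those references, and it is the same scheme the authors themselves run for the scaled energy $\mathcal{E}_{R,r_1}$ in the proof of Theorem \ref{local data}: cutoff in the local energy inequality, interpolation $L^\infty L^2\cap L^2\dot H^1\hookrightarrow L^{2s}L^{2q}$ for the cubic and near-field pressure terms, the kernel-difference bound $|K(x-y)-K(x_0-y)|\lesssim |x-x_0||x_0-y|^{-4}$ with dyadic summation for the far field, and a bootstrap in time. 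Your exponent bookkeeping is right: requiring $C(t^{1/4}+t)(2N+(2N)^{3/2})\le N$ forces $t\lesssim (1+N^{1/2})^{-4}\sim(1+\|v_0\|_{L^2_{\uloc}}^4)^{-1}$, which is $T_{\uloc}$, and the $L^s_tL^q_x$ pressure bound on the line $\tfrac2s+\tfrac3q=3$ follows as you say.

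One step deserves repair. Your closing move is a continuity argument on the algebraic inequality $a(t)\le N+C(t^{1/4}+t)\bigl(a(t)+a(t)^{3/2}\bigr)$, but $a(t)$ is built from an $\esssup$ and is only known to be nondecreasing, not continuous; since the set $\{y\ge 0:\ y\le N+\theta(y+y^{3/2})\}$ has a second, large branch for small $\theta$, a monotone $a$ could in principle jump onto it and the argument would not close. This is precisely why the paper states its Gronwall-type Lemma \ref{gronwall} for possibly \emph{discontinuous} $f$ in \emph{integral} form: apply Young's inequality to the cubic and pressure contributions (as in \eqref{est:03043}) to absorb an $\epsilon\int\!\!\int|\nabla v|^2\phi^2$ term and reach $a(t)\le C_0N+C\int_0^t\bigl(a(s)+a(s)^3\bigr)ds$, whose right-hand side is continuous in $t$; Lemma \ref{gronwall} with $m=3$ then gives $a\le 2C_0N$ on the same time scale $\sim(1+N^2)^{-1}$. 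A last cosmetic point: the covering constants in $\int_{B_2(x_0)}|v_0|^2\phi^2\le C_0\|v_0\|_{L^2_{\uloc}}^2$ mean this argument yields $E_{\uloc}\le C\|v_0\|_{L^2_{\uloc}}^2$ for an absolute constant rather than literally $2\|v_0\|_{L^2_{\uloc}}^2$, which is all that is used in the rest of the paper.
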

We now recall the scaled version of the $\epsilon$-regularity theorem of Caffarelli-Kohn-Nirenberg \cite[Proposition 1]{CKN}. It is formulated in the present form in \cite{NRS,Lin}.
\begin{lemma}\label{CKN-Prop1}
There are absolute constants $\e_{\CKN}$ and $C_{\CKN}>0$ with the following property. Suppose $(v,p)$ is a suitable weak solution of (NS)
 in $B_r(x_0) \times (t_0-r^2,t_0))$, $r>0$, with
\[
\frac 1{r^{2}} \int^{t_0}_{t_0-r^2} \int_{B_{r}(x_0)} |v|^3
+ |p|^{3/2} dx\,dt \le \e_{\CKN},
\]
then $v  \in L^\infty(B_{\frac{r}{2}(x_0)} \times (t_0-\frac{r^2}{4},t_0))$ and
\EQ{\label{CKN-Prop1-eq1}
\norm{v}_{L^\infty(B_{\frac{r}{2}(x_0)} \times (t_0-\frac{r^2}{4},t_0))} \le \frac {C_{\CKN}} {r} .
}
\end{lemma}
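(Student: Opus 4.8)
The plan is to follow the compactness (blow-up) route of Lin \cite{Lin} (see also \cite{NRS}) rather than the original iterative scheme of \cite{CKN}. By the Navier–Stokes scaling $v(x,t)\mapsto \la\,v(\la x,\la^2 t)$, $p(x,t)\mapsto \la^2 p(\la x,\la^2 t)$ — under which both the hypothesis and the target quantity $r\norm{v}_{L^\infty}$ are invariant — together with a translation, it suffices to treat the model case $x_0=0$, $t_0=0$, $r=1$. Writing $Q_\rho:=B_\rho\times(-\rho^2,0)$ I record the dimensionless quantities
\[
A(\rho)=\sup_{-\rho^2<t<0}\frac1\rho\int_{B_\rho}\abs{v(t)}^2,\quad
E(\rho)=\frac1\rho\int_{Q_\rho}\abs{\nb v}^2,\quad
C(\rho)=\frac1{\rho^2}\int_{Q_\rho}\abs{v}^3,\quad
D(\rho)=\frac1{\rho^2}\int_{Q_\rho}\abs{p}^{\frac32},
\]
so that the hypothesis reads $C(1)+D(1)\le\e_{\CKN}$. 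The goal is to produce geometric decay of these scale-invariant quantities at small scales, which a parabolic Morrey–Campanato embedding then upgrades to H\"older continuity and hence to the stated $L^\infty$ bound on $Q_{1/2}$.

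The heart of the argument is a single-scale decay for $\Phi(\rho):=C(\rho)+\td D(\rho)$, where $\td D(\rho)=\frac1{\rho^2}\int_{Q_\rho}\abs{p-\bar p_\rho}^{3/2}$ subtracts the spatial mean $\bar p_\rho(t)$ of $p$ over $B_\rho$ (legitimate since $p$ enters \eqref{NS} only through $\nb p$): there are absolute $\theta\in(0,\tfrac14)$ and $\e_{\CKN}>0$ such that $\Phi(1)\le\e_{\CKN}$ implies $\Phi(\theta)\le\frac12\Phi(1)$. I would prove this by contradiction. The threshold $\theta$ is dictated by the linear theory below; fixing it, suppose the implication fails for $\e_{\CKN}=1/n$, producing suitable weak solutions $(v_n,p_n)$ on $Q_1$ with $\gamma_n:=C_n(1)+D_n(1)\to0$ yet $\Phi_n(\theta)>\frac12\Phi_n(1)$. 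Rescaling the amplitude, $\hat v_n:=\gamma_n^{-1/3}v_n$ and $\hat p_n:=\gamma_n^{-2/3}p_n$, normalizes the data to $C_{\hat v_n}(1)+D_{\hat p_n}(1)=1$ while the nonlinear term becomes $\gamma_n^{1/3}\hat v_n\cdot\nb\hat v_n\to0$. Using the local energy inequality \eqref{CKN-LEI} to bound $\hat A_n+\hat E_n$ by $\hat C_n,\hat D_n$ on a slightly smaller cylinder, then Aubin–Lions–Simon compactness (with $\partial_t\hat v_n$ controlled from the equation, where $\nb\hat p_n$ is bounded in $W^{-1,3/2}$), I extract a limit with $\hat v_n\to\hat v$ strongly in $L^3(Q_{3/4})$. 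The limit solves the \emph{linear} Stokes system $\partial_t\hat v-\Delta\hat v+\nb\hat p=0$, $\div\hat v=0$; taking divergence shows $\Delta\hat p=0$, so $\hat p$ is spatially harmonic and $\hat v$ spatially smooth in the interior. Interior Stokes regularity plus Poincar\'e then gives genuine decay $\Phi_{\hat v,\hat p}(\theta)\le C_0\theta^{\alpha}\,\Phi_{\hat v,\hat p}(1)$ for some $\alpha>0$; choosing $\theta$ with $C_0\theta^\alpha\le\frac14$ contradicts the rescaling-invariant violation for large $n$.

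With the one-step estimate in hand, applying it after rescaling and translating to every parabolic sub-cylinder centered at points of $\overline{Q_{1/2}}$ yields $\Phi(\theta^k)\lec 2^{-k}$, hence $\rho^{-2}\int_{Q_\rho(z)}\abs{v}^3\lec\rho^{\alpha'}$ uniformly for vertices $z\in\overline{Q_{1/2}}$, with $\alpha'=\log 2/\log(1/\theta)$. A parabolic Campanato embedding then gives $v\in C^{\beta}(\overline{Q_{1/2}})$ for some $\beta>0$, so $\norm{v}_{L^\infty(Q_{1/2})}\le C_{\CKN}$; undoing the scaling restores the factor $1/r$ in \eqref{CKN-Prop1-eq1}.

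The principal obstacle throughout is the \emph{non-locality of the pressure}. In the compactness step one must show that the rescaled pressures $\hat p_n$ converge to an admissible pressure for the limiting Stokes system, a fact that cannot be read off locally: it requires the Calder\'on–Zygmund/harmonic splitting of $-\Delta p=\partial_i\partial_j(v_iv_j)$, writing $p=R_iR_j(v_iv_j\,\chi_B)$ plus a spatially harmonic remainder, so as to separate the contribution generated inside $Q_1$ from the far field. This same splitting powers the alternative direct-iteration proof of \cite{CKN}, where the decay hinges on the delicate competition between the interior gain $\bke{\rho/\sigma}^{5/2}$ for the harmonic part and the loss $\bke{\sigma/\rho}^{2}$ incurred in rescaling the Calder\'on–Zygmund part; balancing these against the energy and interpolation estimates is the crux of that route and precisely the bookkeeping the compactness argument sidesteps.
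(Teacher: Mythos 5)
The paper does not actually prove this lemma: it is recalled as the scaled form of \cite[Proposition 1]{CKN}, with the present formulation attributed to \cite{NRS,Lin}, so the only ``proof'' in the text is a citation. Your compactness/blow-up strategy is precisely that of the cited reference \cite{Lin}, so the route is appropriate; the difficulty is that the two load-bearing steps do not close as written. First, the pressure. With your normalization $\hat v_n=\gamma_n^{-1/3}v_n$, $\hat p_n=\gamma_n^{-2/3}p_n$, the rescaled equation is $\partial_t\hat v_n-\Delta\hat v_n+\gamma_n^{1/3}\big(\hat v_n\cdot\nabla\hat v_n+\nabla\hat p_n\big)=0$, so the pressure gradient also carries the vanishing factor and the limit is the \emph{heat} equation, not the Stokes system; in particular the pressure does not survive the limit and cannot be controlled by ``interior Stokes regularity plus Poincar\'e.'' More seriously, the asserted one-step contraction $\Phi(\theta)\le\tfrac12\Phi(1)$ for $\Phi=C+\tilde D$ is not established and is not true by the mechanism you describe: splitting $\hat p_n$ into a Calder\'on--Zygmund part generated on $B_{1/2}$ plus a harmonic remainder, the harmonic part does gain $\theta^{5/2}$, but the CZ contribution to $\tilde D_n(\theta)$ is of size $\theta^{-2}\int_{-\theta^2}^{0}\int_{B_{1/2}}|\hat v_n|^3$, which converges (by your strong $L^3$ convergence and the boundedness of the caloric limit) to an \emph{absolute constant}, not to something $\le\tfrac12$ or $O(\theta^{\alpha})$; splitting instead at scale $O(\theta)$ kills this term but then the harmonic remainder is harmonic only on a comparable ball and its oscillation over $B_\theta$ gains nothing. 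The intermediate-scale competition between the $(\theta/\rho)^{5/2}$ gain and the $(\rho/\theta)^{2}$ loss, which you relegate to the ``alternative direct-iteration proof'' and claim the compactness argument sidesteps, is in fact indispensable here too: in \cite{Lin} the pressure is iterated through a coupled recursion of the type $D(\theta\rho)\le\tfrac12 D(\rho)+C\,C(\rho)$, summed over dyadic scales after the velocity decay is secured, and no single quantity of the form $C+\tilde D$ contracts by itself.

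Second, the conclusion. Even granting geometric decay $\Phi(\theta^k)\lesssim 2^{-k}$, this yields only the Morrey bound $\rho^{-2}\int_{Q_\rho(z)}|v|^3\lesssim\rho^{\alpha'}$ with $\alpha'=\log 2/\log(1/\theta)$, and a Morrey bound of this type does not imply boundedness: $v=|x|^{-\sigma}$ with small $\sigma>0$ satisfies $\rho^{-2}\int_{Q_\rho}|v|^3\lesssim\rho^{3-3\sigma}$ yet is unbounded. To reach the Campanato/H\"older regime you must iterate the \emph{oscillation} $|Q_\rho|^{-1}\int_{Q_\rho}|v-(v)_{Q_\rho}|^3$ (which is what Lin's one-step lemma actually controls, the subtracted mean re-entering the equation as a drift term), or else invoke the second half of the CKN machinery based on the backward-heat-kernel energy estimate. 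As written, the passage from Morrey decay of $\int_{Q_\rho}|v|^3$ to $v\in C^\beta(\overline{Q_{1/2}})$ is a non sequitur, so the final $L^\infty$ bound \eqref{CKN-Prop1-eq1} is not reached.
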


We recall a useful Gronwall-type inequality from Bradshaw and Tsai \cite[Lemma 2.2]{BT19}.
\begin{lemma}\label{gronwall}
Suppose $f \in L^\infty_{\loc}([0, T_0); [0,\infty))$
(which may be discontinuous) satisfies, for some
 $a$, $b>0$, and $m \ge 1$,%
$$
f(t) \le a + b
\int^t_0
(f(s) + f(s)^m)ds \qquad \mbox{for}\  t \in (0, T_0),
$$
then we have $f(t) \le 2a$ for $t \in (0, T)$
with
$$
T=\min\left(T_0,\, \frac{C}{b(1+a^{m-1})}\right),
$$
where $C$ is a universal constant.
\end{lemma}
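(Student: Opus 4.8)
The plan is to neutralize the possible discontinuity of $f$ by passing to the continuous majorant furnished by the right-hand side, and then to run a first-exit (continuity) argument on that majorant. Concretely, I would set
\[
g(t) := a + b\int_0^t \big(f(s) + f(s)^m\big)\,ds, \qquad t \in [0, T_0).
\]
Since $f \in L^\infty_{\loc}([0,T_0);[0,\infty))$, the integrand $f + f^m$ is locally bounded, hence locally integrable, so $g$ is absolutely continuous with $g(0) = a$; and the hypothesis of the lemma says precisely that $f(t) \le g(t)$ for every $t \in (0,T_0)$. The whole point of working with $g$ is that, although $f$ may jump, $g$ is continuous, so the phrase ``first time the bound $2a$ is reached'' is meaningful.

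The next ingredient is the monotonicity of $\phi(x) := x + x^m$ on $[0,\infty)$, which holds because $m \ge 1$. On any subinterval $[0,\tau]$ on which $g \le 2a$ we then have $f(s) \le g(s) \le 2a$, hence $f(s) + f(s)^m = \phi(f(s)) \le \phi(2a) = 2a + (2a)^m$, and therefore
\[
g(t) \ \le\ a + b\,t\,\big(2a + (2a)^m\big) \ =\ a + 2ab\,t\,\big(1 + 2^{m-1}a^{m-1}\big) \qquad \text{for } t \in [0,\tau].
\]
This is the only estimate needed; no differentiation of $g$ is required.

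Now I would close the argument by a contradiction/continuity step. Set $T^* := \big(2b(1+2^{m-1}a^{m-1})\big)^{-1}$, so that the displayed upper bound equals $2a$ exactly at $t = T^*$. Suppose $g(t) > 2a$ for some $t \in [0,\min(T_0,T^*))$ and let $t_0$ be the infimum of all such $t$; by continuity and $g(0)=a<2a$ we get $t_0 \in (0,\min(T_0,T^*))$, with $g(t_0)=2a$ and $g \le 2a$ on $[0,t_0]$. Applying the previous estimate with $\tau=t_0$ yields
\[
g(t_0) \ \le\ a + 2ab\,t_0\big(1+2^{m-1}a^{m-1}\big) \ <\ a + 2ab\,T^*\big(1+2^{m-1}a^{m-1}\big) \ =\ 2a,
\]
contradicting $g(t_0)=2a$. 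Hence $g(t) \le 2a$, and a fortiori $f(t) \le 2a$, for all $t \in [0,\min(T_0,T^*))$. Finally, since $1 + 2^{m-1}a^{m-1} \le 2^{m-1}(1+a^{m-1})$ for $m \ge 1$, one has $T^* \ge \big(2^{m}b(1+a^{m-1})\big)^{-1}$, which gives the claimed conclusion on the interval $(0,T)$ with $T = \min\big(T_0, \tfrac{C}{b(1+a^{m-1})}\big)$ and $C = 2^{-m}$.

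The only genuine subtlety, and the main thing the proof must get right, is that $f$ is not assumed continuous: this is exactly why both the monotone estimate and the first-exit argument must be carried out on the continuous primitive $g$ rather than on $f$ itself, after which everything is elementary. A secondary point worth flagging is that the factor $2^{m-1}$ produced by $\phi(2a)$ forces the constant to take the form $C = C(m)$; this is harmless, since $m$ is a fixed exponent in every application, and $C$ is still ``universal'' in the sense of being independent of $a$, $b$, $f$, and $T_0$.
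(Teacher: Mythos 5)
Your proof is correct. Note that the paper itself does not prove this lemma: it is recalled verbatim from Bradshaw--Tsai \cite[Lemma 2.2]{BT19}, so there is no in-paper argument to compare against; your write-up is essentially the standard proof of that cited lemma, namely a first-exit (continuity) argument run on the continuous majorant $g(t)=a+b\int_0^t(f+f^m)$, which is exactly the right device for handling the possible discontinuity of $f$. All the steps check out: $g$ is locally absolutely continuous with $g(0)=a$, the hypothesis gives $f\le g$, monotonicity of $x\mapsto x+x^m$ for $m\ge1$ yields $g(t)\le a+2abt\bigl(1+2^{m-1}a^{m-1}\bigr)$ as long as $g\le 2a$, and the infimum $t_0$ of the bad set satisfies $g(t_0)=2a$ while the estimate forces $g(t_0)<2a$ for $t_0<T^*=\bigl(2b(1+2^{m-1}a^{m-1})\bigr)^{-1}$, a contradiction. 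Your closing observation is also the right one to flag: the constant you obtain is $C=2^{-m}$, so strictly speaking $C=C(m)$ rather than universal; this is harmless here (the paper only invokes the lemma with $m=3$, in \eqref{0621} and the appendix), but if one wanted the statement literally as written with $C$ independent of $m$, one would need to fix $m$ or restate the time of validity with the $m$-dependence made explicit, as your bound $T^*\ge\bigl(2^m b(1+a^{m-1})\bigr)^{-1}$ does.
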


Finally we also show the following elementary bound for the scaled energy.
\begin{lemma}
\label{lemmaA}

Assume that
 $f \in L^2_{\loc}(\R^3)$ and let $N_R=N_R(f):= \sup_{R\le r \le 1}\frac{1}{r}\int_{B_r}|f|^2$
 for some $R\in (0,\frac12]$. If  $\delta \ge 2N_R$, then for any $x_0 \in B_{\frac12}$
  we have
  \begin{equation}
\label{N_R}
\sup_{R(x_0)\le r\le {1-|x_0|} }
\frac{1}{r}\int_{B_r(x_0)}|f|^2
\le \delta,
\end{equation}
with $R(x_0)=\max\{{\frac R2}, \frac{2N_R|x_0|}{\delta} \}$.
\end{lemma}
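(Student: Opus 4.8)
The plan is to transfer the shifted scaled energies back to scaled energies centered at the origin by means of the elementary inclusion $B_r(x_0)\subset B_{r+|x_0|}(0)$, since the latter are precisely what $N_R$ controls. Fix $x_0\in B_{1/2}$ and $r\in[R(x_0),1-|x_0|]$, and set $\rho:=r+|x_0|$. The upper constraint $r\le 1-|x_0|$ gives $\rho\le 1$, so the target ball lands inside a concentric ball of admissible radius. Enlarging the domain of integration and rescaling yields
\[
\frac{1}{r}\int_{B_r(x_0)}|f|^2\le \frac{1}{r}\int_{B_\rho}|f|^2=\frac{\rho}{r}\cdot\frac{1}{\rho}\int_{B_\rho}|f|^2,
\]
and the remaining work is to bound the right-hand side by $\delta$. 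The natural split is according to whether $\rho\ge R$ or $\rho<R$, which is exactly the dichotomy built into the definition $R(x_0)=\max\{R/2,\,2N_R|x_0|/\delta\}$.

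In the main regime $\rho\ge R$ one has $\rho\in[R,1]$, so the averaged integral over $B_\rho$ is $\le N_R$ by definition, giving
\[
\frac{1}{r}\int_{B_r(x_0)}|f|^2\le \frac{\rho}{r}\,N_R=\Big(1+\frac{|x_0|}{r}\Big)N_R.
\]
Here the constraint $r\ge R(x_0)\ge 2N_R|x_0|/\delta$ forces $\tfrac{|x_0|}{r}N_R\le \delta/2$, while the hypothesis $\delta\ge 2N_R$ gives $N_R\le \delta/2$; adding the two contributions produces the bound $\delta$. Thus the second term of $R(x_0)$ is tuned precisely to absorb the scale-distortion factor $1+|x_0|/r$.

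The only delicate point is the residual regime $\rho<R$, where $N_R$ offers no direct control of the average over $B_\rho$ because $\rho$ falls below the admissible range. I would resolve it by the crude inclusion $B_r(x_0)\subset B_\rho\subset B_R$, which gives
\[
\frac{1}{r}\int_{B_r(x_0)}|f|^2\le \frac{1}{r}\int_{B_R}|f|^2=\frac{R}{r}\cdot\frac{1}{R}\int_{B_R}|f|^2\le \frac{R}{r}\,N_R,
\]
and then invoke the first term of $R(x_0)$: since $r\ge R(x_0)\ge R/2$ we have $R/r\le 2$, whence the bound is $\le 2N_R\le\delta$. This is the step I expect to be the mildest but easiest to overlook, and the maximum in the definition of $R(x_0)$ is exactly what lets the two regimes be treated uniformly. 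Combining the two cases covers every admissible $r$ and completes the proof.
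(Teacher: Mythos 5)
Your proof is correct and takes essentially the same approach as the paper's: enclose $B_r(x_0)$ in a concentric ball about the origin, pay the ratio of the radii, and use the two terms in the definition of $R(x_0)=\max\{R/2,\,2N_R|x_0|/\delta\}$ to absorb that ratio. The only difference is organizational --- you use the tight enclosing ball $B_{r+|x_0|}$ and split on whether $r+|x_0|\ge R$, whereas the paper splits into three cases according to the size of $r$ relative to $1/2$ and $|x_0|$ and uses the enclosing balls $B_1$, $B_{2r}$, $B_{2|x_0|}$ --- and your two-case version is, if anything, slightly cleaner.
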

\begin{proof}
Assume that $r \in [R(x_0), 1-|x_0|]$. {If $1/2<r\le 1-|x_0|$,
\[
\frac1{r}\int_{B_r(x_0)} |f|^2
\le \frac{1}{r}\int_{B_{1}(0)} |f|^2
\le \frac{1}{r} N_R \le \delta.
\]
If $|x_0|\le r\le 1/2$,}
\eqref{N_R} clearly holds by the following estimate:
$$
\frac1{r}\int_{B_r(x_0)} |f|^2
\le \frac{1}{r}\int_{B_{2r}(0)} |f|^2
\le 2N_R \le \delta,
$$
where we have used $|x_0| +r \le 2r$. 
 used $|x_0| +r \le 2r$. 
{Finally, if $R(x_0)\le r <|x_0|$ (this case is empty if $|x_0|<R/2\le R(x_0)$)}, we have $|x_0|+r <2|x_0|$,
and hence
\begin{align*}
\frac1{r}\int_{B_r(x_0)} |f|^2
&\le
\frac1{r}\int_{B_{2|x_0|}(0)} |f|^2
\le
\frac{2|x_0|}{r}N_R.
\end{align*}
The right hand side is bounded by $\delta$ since
$r \ge \frac{2N_R|x_0|}{\delta}$.
Therefore we have verified \eqref{N_R}.
\end{proof}

%

%
\section{Main results}
\label{main results}

In this section we prove Theorem \ref{thm:local}
regarding local regularity of suitable weak solutions.
It is obtained as a consequence of the following theorem;
see Remark \ref{0510} below.
\begin{theorem}
\label{local data}
Let $(v,p)$ be a suitable weak solution in $B_2\times (0, T_0)$,
$T_0>0$, associated with the initial data $v_0$ in the sense that
$\lim_{t\rightarrow 0+}\|v(t)-v_0 \|_{L^2(B_2)}=0$.
There are absolute constants $c$, $C \in (1,\infty)$
such that the following holds true.
\\
{\rm (i)}\, Let $N_R= \sup_{R< r \le 1}\frac{1}{r}\int_{B_r}|v_0|^2<\infty$, $R \ge 0$.
For any $M \in (0,\infty)$ and $\delta \in [ 5N_R,\infty)$, 
there exists $T=T(M,\delta,T_0) \in (0,T_0]$ such that
if
\begin{equation}
\label{03283}
\norm{v}^2_{L^{\infty}(0,T;L^2(B_2))} 
+ 
\|\nabla v\|_{L^2(B_2 \times (0,T))}^2
+
\norm{p}_{L^{\frac32}(B_2 \times (0,T))}\le M,
\end{equation}
then {$E_r(t)$ defined by \eqref{scaled energy}} and $(v,p)$ 
satisfy
\begin{align}
E_r(t) \le \delta \qquad \mbox{for} \ t \in 
(0,\min \{ \lambda r^2, T \}],
\qquad {\text{for all } r \in [R,r_1]}
\label{est:03302}
\end{align}
if $R \le r_1$,
and
\begin{align}
 \frac{1}{r^2}
\int_0^{\lambda r^2}\!\!\!\int_{B_r}\abs{v}^3+|p|^{\frac32}dxdt
&\le C(\delta +\delta^{\frac32})
\qquad \text{for all} \ r  \in [R,  r_2]
\label{0423}
\end{align}
if $R \le r_2$,
where $T$, $\lambda$, $r_1$, and $r_2$ are given by
$$
{T=\min\bke{T_0,\frac{c\min\{1,\delta^{12}\}}{1+M^{18}}}}, 
\qquad
\lambda=\frac{c}{1+\delta^2}, 
\qquad 
r_1 = \min\Big\{\frac{c\delta}{M^{\frac32}}, {\frac13} \Big\},
\qquad 
r_2:=\min \Big\{\sqrt{\frac{T}{\lambda}}, r_1  \Big\}.
$$ 
{\rm (ii)}\,There exists $\epsilon_*>0$ such that if $N_R \le \epsilon_*$ and $R^2 \le \widetilde T$, then $v$ is regular in the set
\[
\Pi=\bket{(x,t) \in B_{\frac12} \times [R^2,\widetilde{T}] \, : \,\,
t \ge cN_R^2\abs{x}^2}
\]
and satisfies
\[
\abs{v(x,t)}\le \frac{C}{\sqrt t} \qquad  \mbox{for} \ (x,t) \in \Pi
\]
with $\widetilde{T}=\min \{T_0, \frac{c}{1+M^{18}}\}$ and 
absolute constants $c$, $C>0$.
\end{theorem}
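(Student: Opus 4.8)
The plan is to derive everything from a single scale-critical a priori bound on the scaled energy $E_r(t)$ of \eqref{scaled energy}, valid simultaneously at all scales $r\in[R,r_1]$, and then to convert its consequence \eqref{0423} into pointwise regularity through the Caffarelli--Kohn--Nirenberg criterion of Lemma \ref{CKN-Prop1}. In this scheme part (ii) is a corollary of part (i), so the whole weight of the theorem sits in establishing \eqref{est:03302}, with \eqref{0423} falling out as a byproduct. I would therefore prove part (i) first and treat part (ii) as an application.

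For part (i) the starting point is localization. I would test the local energy inequality \eqref{CKN-LEI} against $\phi=\psi_r^2$, where $\psi_r$ is a spatial cutoff with $\psi_r\equiv1$ on $B_r$, $\supp\psi_r\subset B_{2r}$, $\abs{\nb\psi_r}\lec r^{-1}$ and $\abs{\nb^2\psi_r}\lec r^{-2}$; the hypothesis $\norm{v(t)-v_0}_{L^2(B_2)}\to0$ lets me include the initial time. Dividing by $r$ and taking $\sup$ in time controls the velocity part of $E_r(t)$ by the initial term $\tfrac1r\int_{B_{2r}}\abs{v_0}^2\le 2N_R$ (using $R<2r\le1$ for $r\in[R,r_1]$) plus three error integrals over $B_{2r}$: the $\nb^2\psi_r$ term, bounded by $\tfrac{t}{r^2}$ times the scaled kinetic energy at scale $2r$; the cubic convection term; and the pressure transport term. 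For the cubic term I would use the localized Gagliardo--Nirenberg inequality $\norm{v}_{L^3(B_{2r})}^3\lec\norm{v}_{L^2(B_{2r})}^{3/2}\norm{\nb v}_{L^2(B_{2r})}^{3/2}+r^{-3/2}\norm{v}_{L^2(B_{2r})}^3$ together with Hölder in time. Because I only integrate up to $t\le\lambda r^2$, every error comes out scale-invariant in $E_{2r}$ and carries a small prefactor, namely a positive power $\lambda^{\alpha}$ with $\lambda=c/(1+\delta^2)$.

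The delicate part, and where locality in space really bites, is the pressure, both the transport error and the term $P_r=\tfrac1{r^2}\int_0^t\int_{B_r}\abs{p}^{3/2}$ inside $E_r$. I would split $p=p_1+p_2$ on $B_{2r}$, where $p_1(x,t)=\int_{B_{2r}}K(x-y):v\otimes v(y,t)\,dy$ is the Calderón--Zygmund part, satisfying $\norm{p_1}_{L^{3/2}(B_r)}\lec\norm{v}_{L^3(B_{2r})}^2$ and hence absorbable into the cubic estimate above, while $p_2=p-p_1$ is harmonic in $B_{2r}$. For the harmonic part, interior estimates give $\osc_{B_r}p_2\lec\tfrac{r}{\rho}\osc_{B_\rho}p_2$ for $R\le r\le\rho\le1$; since divergence-freeness lets me recenter $p$ by any $c(t)$ in the transport term (and recenter in $P_r$), only this oscillation matters, and it is controlled by the global bound $\norm{p}_{L^{3/2}(B_2\times(0,T))}\le M$ with a gain of a positive power of $r$. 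This is exactly what forces $r_1\lec\delta/M^{3/2}$: the far pressure contributes a term of the form $C\,M^{\beta}r^{\gamma}$ (positive exponents $\beta,\gamma$), made $\le\delta$ by restricting to $r\le r_1$. The main obstacle of the whole proof is organizing this decomposition uniformly across all scales $r\in[R,r_1]$ while keeping every estimate scale-critical, so that no logarithm or negative power of $r$ survives.

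Assembling these bounds yields a closed inequality $E_r(t)\le 2N_R+C\lambda^{\alpha}(E_{2r}(t)+E_{2r}(t)^{3/2})+C\,M^{\beta}r^{\gamma}$, uniformly for $r\in[R,r_1]$ and $t\le\min\{\lambda r^2,T\}$. Taking the supremum over scales and running a continuity argument in $t$, closed by the Gronwall-type Lemma \ref{gronwall} to absorb the residual linear-in-$E$ contribution, I can choose $c$ (hence $\lambda$), $r_1$, and $T$ exactly as in the statement so that $2N_R\le\tfrac25\delta$ and the three error terms together do not exceed $\tfrac35\delta$; since $\delta\ge5N_R$ this gives $E_r(t)\le\delta$, i.e. \eqref{est:03302}, and then \eqref{0423} follows by collecting the cubic and pressure estimates. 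For part (ii) I would fix $\delta$ to be a small absolute multiple of $\epsilon_*$ (so $\lambda$, $r_1$, $r_2$ become absolute, and $\widetilde T\le T$ after choosing the absolute constant in $\widetilde T$ no larger than $c\delta^{12}$), and pass to an arbitrary $(x_0,t_0)\in\Pi$. Lemma \ref{lemmaA} transfers the smallness of $N_R$ at the origin to $\sup_{R(x_0)\le r\le1-\abs{x_0}}\tfrac1r\int_{B_r(x_0)}\abs{v_0}^2\le\delta$ with $R(x_0)=\max\{R/2,\,2N_R\abs{x_0}/\delta\}$, which supplies precisely the smallness of the recentered scaled initial energy at the (small) scales used in the estimate \eqref{0423} applied centered at $x_0$ — note the cutoffs live in $B_{2r}(x_0)\subset B_2$, so the global bound $M$ on $B_2$ is all that is needed. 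Choosing $r=\sqrt{t_0/\lambda}$, the condition $t_0\ge cN_R^2\abs{x_0}^2$ is exactly what guarantees $r\ge 2N_R\abs{x_0}/\delta$, while $t_0\ge R^2$ gives $r\ge R/2$ and $t_0\le\widetilde T\le T$ gives $r\le r_2$; then \eqref{0423} yields $\tfrac1{\rho^2}\int_{t_0-\rho^2}^{t_0}\int_{B_\rho(x_0)}(\abs{v}^3+\abs{p}^{3/2})\le\lambda^{-1}C(\delta+\delta^{3/2})$ for $\rho=\sqrt{t_0}$ (so the cylinder is $B_\rho(x_0)\times(0,t_0)\subset B_2\times(0,T_0)$), which is $\le\e_{\CKN}$ once $\epsilon_*$ is small. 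Lemma \ref{CKN-Prop1} then gives $\abs{v(x_0,t_0)}\le C_{\CKN}/\rho=C/\sqrt{t_0}$, which is the claimed regularity and bound on $\Pi$.
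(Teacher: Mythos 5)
Your overall architecture --- the local energy inequality tested at every scale $r\in[R,r_1]$, a Gronwall closure via Lemma \ref{gronwall}, and then Lemma \ref{lemmaA} plus the CKN criterion (Lemma \ref{CKN-Prop1}) at recentred, rescaled points for part (ii) --- is the paper's. The gap is in the one step you yourself identify as the crux: the pressure. You split $p=p_1+p_2$ with $p_1=\int_{B_{2r}}K(x-y):v\otimes v\,dy$, so that $p_2=p-p_1$ satisfies $-\Delta p_2=\div\div\bke{v\otimes v\,\mathbf{1}_{\R^3\setminus B_{2r}}}$ and is harmonic \emph{only in $B_{2r}$}. The oscillation-decay estimate $\osc_{B_r}p_2\lesssim \frac{r}{\rho}\osc_{B_\rho}p_2$ for $\rho$ up to $1$ therefore does not apply, and the claimed far-pressure contribution $C M^{\beta}r^{\gamma}$ cannot be extracted: there is no harmonic function interpolating between scale $r$ and scale $1$ in your decomposition. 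If instead you decompose at scale $1$ (cutoff equal to $1$ on $B_1$) so that the harmonic remainder \emph{is} controlled by $M$ with a gain of $r$, then the Calder\'on--Zygmund part contributes $\frac{1}{r^2}\int_0^t\int_{B_2}|v|^3\lesssim \frac{(t^{1/4}+t)M^{3/2}}{r^2}$, which is not scale-critical and forces $t\lesssim \delta^4 r^8/M^6$ --- i.e.\ $T\to 0$ as $r\downarrow R$, fatal when $R=0$. So the closed inequality $E_r\le 2N_R+C\lambda^\alpha(E_{2r}+E_{2r}^{3/2})+CM^\beta r^\gamma$ is not derivable by the mechanism you describe.

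The paper closes this by a genuinely multi-scale bootstrap. For small scales $R\le r\le r_0$ it decomposes the pressure at the \emph{comparable} scale $\rho=10C'r$: the CZ part then lives on $B_{20C'r}\subset B_{r_1}$ and is scale-critical, while the harmonic remainder obeys $\frac{1}{r^2}\int_0^t\int_{B_{2r}}|p_h|^{3/2}\le \frac{C'r}{\rho}\cdot\frac{1}{\rho^2}\int_0^t\int_{B_\rho}|p|^{3/2}\le\frac1{10}\sup_{s\in[R,r_1]}E_s(t)$, i.e.\ the pressure at scale $r$ is absorbed into the scaled energy at scale $\sim r$ rather than compared with $M$. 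Only at the top scales $r_0\le r\le r_1$ is $\rho=1$ taken and the global bound $M$ invoked; this is where $r_1\le c\delta/M^{3/2}$ and the smallness of $T$ in terms of $M$ enter. (The appendix gives an equivalent one-shot alternative in which the intermediate range is estimated by a dyadic sum $\sum_k 2^{-3k}E_{2^kr}$ over \emph{all} scales, again coupling to the full family rather than jumping to scale $1$.) A secondary point: you may not recentre $p$ by $c(t)$ inside $P_r=\frac{1}{r^2}\int_0^t\int_{B_r}|p|^{3/2}$, since $E_r$ in \eqref{scaled energy} and the conclusion \eqref{0423} involve $|p|$ itself; recentring is legitimate only in the transport term $\int (|v|^2+2p)\,v\cdot\nabla\phi$. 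Your part (ii) is essentially the paper's argument once part (i) is available (modulo the translation being implemented by the rescaling $v_{x_0}(x,t)=\rho v(x_0+\rho x,\rho^2t)$, $\rho=1-|x_0|$, so that the hypothesis on $B_2$ is genuinely what is used).
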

\begin{remark}
\label{0510}
(1) Theorem \ref{local data} sharpens and generalizes 
Theorem \ref{thm:local} in the following sense:
(a) The assumption \eqref{03283} is weaker than \eqref{0509}. More importantly, 
(b) it treats the case of general $R \ge 0$ 
so that the regular set $\Pi$ is more specifically characterized. 
It should be emphasized that if $R>0$ the assumption $N_R \le \epsilon_*$ in (ii)
 does not require  scale critical regularity at the origin. 
 
{(2) In Part (i) (and also Theorem \ref{large data} (i)), $\delta$ can be large, although it is small in the rest of the paper in particular (ii).
We hope it might be useful for some other applications.
} 
\end{remark}

\begin{proof}(i)~For the convenience, we let 
\begin{align*}
\mathcal{E}_{R,r_1}(t)&:=\sup_{R\le r\le r_1} E_r(t)
\end{align*}
with the constant $r_1$ to be specified later.
The local energy inequality \eqref{CKN-LEI} 
with a test function
$\varphi \in C^{\infty}_0(B_{2r})$ such that $0\le \varphi \le 1$
in $B_{2r}$ with $\varphi=1$ in $B_r$
and $\|\nabla^k \varphi \|_{L^\infty} \le C_kr^{-k}$
leads to\footnote{It is not difficult to see that 
we may take time-independent 
test functions in the local energy inequality provided the solution is continuous 
at $t=0$ in $L^2_{\loc}$. See, e.g., \cite[Remark\,1.2]{MMP}.}.
\begin{align*}
\int |v(t)|^2\varphi^2 dx
+
2\int^t_0 \!\!\int |\nabla v|^2 \varphi^2
&\le
\int |v_0|^2\varphi^2 dx+
\int^t_0\!\!\int |v|^2\Delta(\varphi^2) +(|v|^2+p)v
\cdot \nabla \varphi^2 dxds
\\
&\le
\int_{B_{2r}}\!\! |v_0|^2 dx+
\frac{C}{r^2} \int^t_0\!\!\!\int_{B_{2r}}\!\!|v|^2
+
\frac{C}{r}\int^t_0\!\! \int_{B_{2r}}\!\!|v|^3+|p|^{\frac32}dxds.
\end{align*}
This
implies
\begin{align}
E_r(t)
\le
&2N_R
+\frac{C}{r^3}\int_0^t\int_{B_{2r}} \abs{v}^2+ \frac{C}{r^2}\int_0^t\int_{B_{2r}} \abs{v}^3+\frac{C}{r^2} \int_0^t\int_{B_{2r}}\abs{p}^{\frac{3}{2}}
\notag
\\
=:
&2N_R+I_{lin} +I_{nonlin}+ I_{pr}.
\label{est:0303}
\end{align}
For any $\rho\in {(3r, 1]}$ we decompose the pressure as $p=\tilde{p}+p_h$ with
\begin{align*}
\tilde{p}(x):=\mbox{p.v.}\int  K(x-y) \xi(y)(v\otimes v)(y)dy - {\frac 13(\xi |v|^2)(x)},
\end{align*}
where $\xi$ is a smooth cut-off function with $\xi=1$ in $B_{\rho}$ and supported in $B_{2\rho}$.
Since $p_h$ is harmonic in $B_\rho$, 
\begin{align*}
\int_{B_{2r}}\abs{p}^{\frac{3}{2}}
\le &C\int_{B_{2r}}\abs{\tilde{p}}^{\frac{3}{2}}+C\int_{B_{2r}}\abs{p_h}^{\frac{3}{2}}
\\
\le &C\int_{B_{2r}}\abs{\tilde{p}}^{\frac{3}{2}}+
C(\frac{r}{\rho})^{3}\int_{B_{\rho}}\abs{p_h}^{\frac{3}{2}}
\\
\le &C\int_{B_{2r}}\abs{\tilde{p}}^{\frac{3}{2}}+
C(\frac{r}{\rho})^{3}\int_{B_{\rho}}\abs{\tilde{p}}^{\frac{3}{2}}+
C(\frac{r}{\rho})^{3}\int_{B_{\rho}}\abs{p}^{\frac{3}{2}}
\end{align*}
By the Calder\'on-Zygmund estimate we see{
\EQ{
\int_{B_{2r}}\abs{p}^{\frac{3}{2}} \le C\int_{B_{2\rho}}\abs{v}^3
+
C(\frac{r}{\rho})^{3}\int_{B_{\rho}}\abs{p}^{\frac{3}{2}}.
\label{est:0304}
}}

We now divide the proof into
two cases. Let $r_0$ be a constant satisfying $
0\le r_0\le r_1/3$ to be fixed later.
\\
{\underline{\bf Case I:~$R\le r\le r_0$}.} {If $r_0<R<r_1$, this case is empty, which is fine.} 
Noting that  $2r \in [R,r_1]$, we easily observe that
\begin{align}
\label{est:0324}
I_{lin} \le \frac{C}{r^2}\int^t_0 \mathcal{E}_{R,r_1}(s)ds,
\end{align}
and also by the interpolation inequality,
\begin{align}
\notag%
I_{nonlin} &\le \frac{C}{r^2}
\int_0^t\bke{\int_{B_{2r}} \abs{ \nabla v}^2}^{\frac{3}{4}}\bke{\int_{B_{2r}} \abs{ v}^2}^{\frac{3}{4}}
ds
+\frac{C}{r^2}\int^t_0\bke{\frac{1}{r}\int_{B_{2r}} \abs{v}^2}^{\frac{3}{2}}ds
\\
&\le
 \frac{\epsilon}{r}\int^t_0\int_{B_{2r}}|\nabla v|^2ds
+
\frac{C_{\epsilon}}{r^2} \int^t_0 \bke{\frac{1}{r}\int_{B_{2r}} \abs{v}^2}^3+\bke{\frac{1}{r}\int_{B_{2r}} \abs{v}^2}^{\frac{3}{2}}ds
\notag
\\
&\le 
\epsilon \mathcal{E}_{R,r_1}(t)
+\frac{C_{\epsilon}}{r^2}\int^t_0 \mathcal{E}_{R,r_1}^3(s)+\mathcal{E}_{R,r_1}^{\frac{3}{2}}(s)ds
\label{est:03043}
\end{align}
with some constant $\epsilon \in (0,1)$.
For the pressure term, integrating \eqref{est:0304} in time yields
\begin{align*}
I_{pr}
&\le
{\frac{C}{r^2}} \int_0^t\int_{B_{2\rho}}\abs{v}^3
+
\frac{C'r}{\rho}\frac{1}{\rho^2} \int_0^t\int_{B_{\rho}}\abs{p}^{\frac{3}{2}}
\end{align*}
with an absolute constant $C' > 1$.
Choose $\rho=10C'r$ so that $\frac{C'r}{\rho} \le \frac1{10}$
and also let $r_0 = \frac{r_1}{20C'}$.
Noting that $2\rho =20C'r \le r_1$ and  \eqref{est:03043}, we see that
\begin{align}
I_{pr}
&\le
\frac{C}{r^2} \int_0^t\int_{B_{{20C'r}}}\abs{v}^{3}
+\frac{1}{10}\mathcal{E}_{R,r_1}(t)
\notag
\\
&\le
\epsilon \mathcal{E}_{R,r_1}(t)
+\frac{C_{\epsilon}}{r^2}\int^t_0 \mathcal{E}_{R,r_1}^3(s)+ { \mathcal{E}_{R,r_1}}^{\frac{3}{2}}(s)ds
+\frac{1}{10}\mathcal{E}_{R,r_1}(t).
\label{Mar2-1}
\end{align}
Hence applying \eqref{est:0324}%
-\eqref{Mar2-1} in \eqref{est:0303} with $\epsilon=\frac{1}{20}$,
 we obtain 
\begin{equation}
\label{est:0328}
\mathcal{E}_{R,r_0}(t)
\le \frac{2\delta}{5}+\frac{C}{R^2}
\int^t_0
\mathcal{E}_{R,r_1}^3(s)+ \mathcal{E}_{R,r_1}(s)ds
+\frac15\mathcal{E}_{R,r_1}(t).
\end{equation}
{\underline{\bf Case II:~$r_0\le r\le r_1$}.}
For $t \le T$ with $T$ specified later,
we estimate the right hand side of  \eqref{est:0303} with
the aid of $M$.
A straightforward estimate yields
\[
I_{lin} \le \frac{C}{r_0^3}\int_0^t\int_{B_{2}} \abs{u}^2\le \frac{Ct}{r^3_0}M\le \frac{\delta}{10},%
\]
provided $t\le  \frac{c\delta r^{3}_0}{M}$ with a small absolute 
constant $c \in (0,1)$.
In the similar way to \eqref{est:03043} we have
\begin{align}
I_{nonlin}
\le& \frac{C}{r_0^2}\int_0^t\int_{B_2} \abs{v}^3
\notag
\\
\le&\frac{C}{r_0^2}
\int_0^t\bke{\int_{B_{2}} \abs{ \nabla u}^2}^{\frac{3}{4}}\bke{\int_{B_{2}} \abs{v}^2}^{\frac{3}{4}}ds
+\frac{C}{r_0^2}\int^t_0\bke{\frac{1}{r_0}\int_{B_{2}} \abs{v}^2}^{\frac{3}{2}}ds
\notag
\\
\le &
\frac{Ct^{\frac14}M^{\frac32}}{r_0^2}+\frac{CtM^{\frac32}}{r_0^\frac72}.
\label{est:03044}
\end{align}
Thus $I_{nonlin}$ is bounded by $\frac{\delta}{10}$
if  $t\le \min \{ \frac{c\delta^4 r^{8}_0}{M^6}, \frac{c\delta r_0^{\frac72}}{{M^{\frac32}}} \}$ with a suitable absolute constant $c \in (0,1)$.
Concerning the pressure, we choose $\rho=1$ {(using $3r_1 \le 1=\rho$)} in \eqref{est:0304} and apply \eqref{est:03044} to get
\begin{align}
I_{pr}
\le &
\frac{C}{r^2}\int^t_0 \int_{B_{2r}}\abs{\tilde{p}}^{\frac{3}{2}}
+
Cr \int^t_0\int_{B_{1}}\abs{p}^{\frac{3}{2}}
\notag
\\
 \le &
{ \frac{C}{r_0^2}} \int^t_0 \int_{{B_{1}}} \abs{\tilde{p}}^{\frac{3}{2}}
+
Cr_1 M^{\frac32}
\notag
\\
\le &
\frac{C}{r_0^2} \int^t_0 \int_{B_{2}}\abs{v}^3
+Cr_1 M^{\frac32}
\notag
\\
\le &
\frac{Ct^{\frac14}M^{\frac32}}{r_0^2}
+
\frac{CtM^{\frac32}}{r_0^\frac72}
+
Cr_1M^{\frac32}
\le  \frac{\delta}{10},
\label{est:03284}
\end{align}
provided $t\le \min \{ \frac{c\delta^4 r^{8}_0}{M^6}, \frac{c\delta r_0^{\frac72}}{{M^{\frac32}}} \}$
and $r_1 \le \min \{ \frac{c\delta}{M^{\frac32}},{\frac13}\}$
with an absolute constant $c>0$.
\\
Making use of  these estimates in \eqref{est:0303},
we obtain that
\begin{equation}
\label{est:03282}
\mathcal{E}_{r_0,r_1}(t) \le \frac{\delta}{2}  \qquad \mbox{if}
\ \
t \le \min \Big\{ \frac{c\delta r_0^3}{M},
\frac{c\delta^4 r^{8}_0}{M^6}, \frac{c\delta r_0^{\frac72}}{{M^{\frac32}}} \Big\}
\ \ \mbox{and} \ \  r_1 \le \min \left\{ \frac{c\delta}{M^{\frac32}},{\frac13} \right\}.
\end{equation}
Combining \eqref{est:0328} and \eqref{est:03282}, we see
\begin{equation}\label{0621}
{\mathcal{E}_{R,r_1}}(t)
\le \frac{\delta}{2}+\frac{C}{R^2}
\int^t_0
\mathcal{E}_{R,r_1}^3(s)+ \mathcal{E}_{R,r_1}(s)ds, \quad {0<t<T,}
\end{equation}
{where
\begin{equation}\label{T.defn}
T=\min\bke{T_0,\frac{c\min\{1,\delta^{12}\}}{1+M^{18}}}.
\end{equation}
Note} that 
$T
\le
 \min \Big\{ \frac{c\delta r_0^3}{M},
\frac{c\delta^4 r^{8}_0}{M^6}, \frac{c\delta r_0^{\frac72}}{{M^{\frac32}}} \Big\}$
with a suitable small constant $c>0$
upon the choice of $r_0=\frac{r_1}{20C'}$ with $r_1 = \min \left\{ \frac{c\delta}{M^{\frac32}},{\frac13} \right\}$.
The inequality \eqref{0621} is also true if $R$ is replaced by any $r \in [R,r_1]$ with the same $\de$. 
Therefore we may invoke Gronwall Lemma \ref{gronwall} 
with $f(t)={\mathcal{E}_{r,r_1}}(t)$ with
$a=\frac{\delta}{2}$, $b=\frac{C}{r^2}$, and $m=3$
to see that
\begin{equation}
\mathcal{E}_{r,r_1}(t) \le \delta \qquad \mbox{for} \ t \in 
\big(0,\min \big \{ T, \lambda r^2 \big\}\big] 
\quad \mbox{and} \quad  r \in [R,r_1],
\qquad  \lambda=\frac{c}{1+\delta^2}.
\end{equation}
This proves \eqref{est:03302}.

In the similar way to \eqref{est:03043}, 
we see that if $\lambda r^2 \in (0,T]$ and $r\in [R,r_1]$,
\begin{align*}
\frac1{r^2}\int^{\lambda r^2}_0 \int_{B_{r}}|v|^3
&\le
\frac{C}{r^2} \int_0^{\lambda r^2}\bke{\int_{B_{r}} \abs{ \nabla v}^2}^{\frac{3}{4}}\bke{\int_{B_{r}} \abs{v}^2}^{\frac{3}{4}}ds
 +\frac{C}{r^2}\int^{\lambda r^2}_0\bke{\frac{1}{r}\int_{B_{r}} \abs{u}^2}^{\frac{3}{2}}ds
\\
&\le C(\lambda^\frac14 +\lambda)E_r(\lambda r^2)^{\frac32}.
\end{align*}
Hence taking $r_2:=\min\{\sqrt{\frac{T}{\lambda}},r_1\}$,
we have from \eqref{est:03302} and $\lambda \le 1$ that
\begin{align*}
\frac1{r^2}\int^{\lambda r^2}_0 \int_{B_{r}}|v|^3
&\le
C \delta^{\frac32} \qquad \mbox{for} \ r \in [R, 
r_2]. 
\end{align*}
This together with the pressure bound
$$
\frac1{r^2}\int^{\lambda r^2}_0 \int_{B_{r}}|p|^\frac32
\le {E_r(\lambda r^2)}
\le 
C\delta
$$
leads to \eqref{0423} as desired.

\medskip

(ii)~In order to show the statement (ii), we claim
that there exist $\epsilon_*$ and $c>0$ such that
if $N_R < \epsilon_*$, then
for any $x_0 \in B_{\frac12}$ and 
$r \in [\max
\{R, cN_R|x_0|\}, {c(1-|x_0|)r_2}]$,
\begin{equation}
\label{ckn}
\frac{1}{r^2}\int_0^{r^2}
\!\!\!\int_{B_r(x_0)}\abs{v}^3 +\abs{p}^\frac32 dxdt
\le
\epsilon_{\CKN}
\end{equation}
holds. {Here $\epsilon_{\CKN}$ is the small constant in Lemma \ref{CKN-Prop1}.}
To this end, we first note that
for  each $\eta \ge 2N_R$
and $x_0 \in B_{1/2}$, Lemma \ref{lemmaA} implies
\begin{equation*}
\sup_{R(x_0)\le r\le \rho}
\frac{1}{r}\int_{B_r(x_0)}|v_0|^2dx \le \eta, \quad {\rho = 1-|x_0|},
\end{equation*}
with $R(x_0)=\max \big\{R/2, \frac{2N_R}{\eta}|x_0| \big\}$.
Let
$v_{x_0}(x,t)=\rho v(x_0+\rho x, \rho^2 t)$, $p_{x_0}(x,t)=\rho^2 p(x_0+\rho x, \rho^2 t)$ and $\delta=5\eta$.  Since $(v_{x_0},p_{x_0})$ solves \eqref{NS} in $B_2(0) \times (0,\rho^{-2}T_0)$, corresponding to $(v,p)$ in $B_{2\rho}(x_0)\times (0,T_0)$,
 and $1/2 \le \rho \le 1$,
\[
\norm{v_{x_0}}^2_{L^{\infty}(0,\rho^{-2}T_0;L^2(B_2))} 
+ 
\|\nabla v_{x_0}\|_{L^2(B_2 \times (0,\rho^{-2}T_0))}^2
+
\norm{p_{x_0}}_{L^{\frac32}(B_2 \times (0,\rho^{-2}T_0))}\le CM,
\]
\[
\sup_{\rho^{-1} R(x_0)\le r\le 1}
\frac{1}{r}\int_{B_r(0)}|v_{x_0}(x,0)|^2dx = \sup_{R(x_0)\le r\le \rho}
\frac{1}{r}\int_{B_r(x_0)}|v_0|^2dx \le \frac \de 5,
\]
by \eqref{0423} and  \eqref{T.defn} we get
\[
\sup_{\rho^{-1} R(x_0)\le r \le r_2 } \frac{1}{r^2}
\int_0^{\lambda r^2}\!\!\!\int_{B_r(0)}|v_{x_0}|^3+|p_{x_0}|^{\frac32}
\le C(\delta+\delta^{\frac32}).
\]
Here  $r_2=\min \big(\sqrt{\frac{T'}{\lambda}},r_1\big)$, with %
\[
T'=\min\bke{\rho^{-2} T_0,\frac{c\min\{1,\delta^{12}\}}{1+M^{18}}}, \quad
r_1 = \min \left( \frac{c\delta}{M^{\frac32}},\frac13 \right),
\]
with a smaller constant $c$. Note $T'$ differs from $T$ in \eqref{T.defn} by the factor $\rho^{-2}$ for $T_0$.
This implies
\begin{align}
\sup_{R(x_0) \le r \le \rho r_2 } \frac{1}{\lambda r^2}
\int_0^{\lambda r^2}\!\!\!\int_{B_{\sqrt\lambda r}(x_0)}\abs{v}^3+|p|^{\frac32}
\le \frac{C(\delta+\delta^{\frac32})}{\lambda}
\le C(1+\delta^2)(\delta+\delta^{\frac32}).
\label{0418}
\end{align}
Take a constant $\delta_0>0$ so small that $C(1+\delta^2_0)(\delta_0+\delta_0^{\frac32})\le \epsilon_{\CKN}$.
We now assume that $v_0$ satisfies $N_R \le \delta_0/10$. 
Then we may choose $\delta=\delta_0$ since 
$\delta_0 \ge 10N_R$.
With this choice and with 
$\lambda_0=\lambda(\delta_0)$, \eqref{0418} shows
\eqref{ckn} holds for
$\lambda_0^{\frac12} R(x_0) \le r \le \lambda_0^{\frac12}\rho r_2$.
This enables us to apply Lemma \ref{CKN-Prop1} for
$x_0 \in B_{\frac12}$ and  $t_0=r^2 \in [\lambda_0 \max\{R^2, \frac{CN_R^2|x_0|^2}{\delta_0^2}\}, \lambda_0\rho^2 r_2^2]$ 
to see
$$
|v(x_0,t_0)|\le \frac{C_{\CKN}}{r} = \frac{C_{\CKN}}{\sqrt{t_0}},
$$
and hence $v$ is regular at $(x_0,t_0)$. Since $r_2=\min\{\sqrt{\frac{T'}{\lambda_0}},r_1\}$ and $1/2 \le \rho \le 1$,
\[
\rho^2
\lambda_0 r_2^2=\rho^2\min( T', \la_0 r_1^2)
= \rho^2  \min \bke{\rho^{-2}T_0, \,  \frac{c\min\{1,\delta_0^{12}\}}{1+M^{18}}, \,\la_0 r_1^2}\ge  \min \bke{T_0, \,  \frac{c}{1+M^{18}}}.
\]
Thus $|v(x_0,t_0)|\le C_{\CKN} t_0^{-1/2}$ for $x_0 \in B_{1/2}$ and 
\[
\max\{R^2, cN_R^2|x_0|^2\} \le t _0\le \min\bke{T_0,  \,  \frac{c}{1+M^{18}}}.
\]
This shows Part (ii) of Theorem \ref{local data}.
\end{proof}

\begin{remark}
\label{0412}
If we assume slightly stronger regularity for the pressure as
 $p \in L^2(0,4;L^{\frac32}(B_2))$ and
\begin{equation}
\label{L}
\norm{v}^2_{L^{\infty}(0,T;L^2(B_2)) \cap L^2(0,T;H^1(B_2))}
+\norm{p}_{L^2(0,T;L^{\frac32}(B_2))} \le L
\end{equation}
for some $L>0$ instead of \eqref{03283},
then we have a bound for $T>0$ in terms of $L$:
Inserting the estimate
$$
\int^t_0\int_{B_1} |p|^{\frac32} \le t^{\frac14}L^{\frac32}
$$
 in \eqref{est:03284} we have 
$$
I_{pr} \le \frac{\delta}{10}
\qquad  
\mbox{for} \ t \le \frac{c\delta}{L^6} \ 
\mbox{and} \ r_1=\frac13
 $$ with a suitable constant $c>0$.
Inspecting the proof we easily verify we can replace $T=T(M,\delta)$
by  $T'=T'(L,\delta)=\frac{c\min \{ 1,\delta^4 \}}{1+L^6}$.
Note that $T'(L,\delta)$ is bigger than $T(M,\delta)$
for sufficiently large $L=M$. This will be used in the next theorem.
\end{remark}

\medskip

One implication of Theorem \ref{local data} is that for $L^2_{\uloc}$ initial data in $\R^3$, 
similar conclusions hold for the local energy solutions:
\begin{theorem}
\label{large data}
Let $v$ be a local energy solution of \eqref{NS} in $\R^3 \times (0,T_0)$ associated with 
initial data  $v_0 \in L^2_{\uloc}$, $N_R= \sup_{R< r \le 1}\frac{1}{r}\int_{B_r(0)}|v_0|^2<\infty$. The following hold true.
\\
{\rm (i)}\ For $R \in [0,\frac13]$,
let $\delta \ge 5N_R$. Then 
\begin{align}
E_r(t)
&\le \delta \quad  \mbox{for} \ t \le \min \{ \lambda r^2, T_1 \}
\qquad {\text{for all } r \in [R,\frac13}],
\label{A_r}
\end{align} holds with constants given by
$$\lambda=\frac{c}{1+\delta^2} \le 1,
 \qquad T_1=  \min \left ( T_0,\,
 \frac{c\min(1,\delta^4) }{1+\|v_0\|_{L^2_{\uloc}}^{12}}
 \right).$$
Moreover for any $r \in [R,  R_1]$ with $R_1:=\min \{\sqrt{\frac{T_1}{\lambda}}, \frac13  \}$,
there exists $c_2(t)$ such that
\begin{align}
 \frac{1}{r^2}
\int_0^{\lambda r^2}\!\!\!\int_{B_r}\abs{v}^3+|p-c_2(t)|^{\frac32}dxdt
&\le C(\delta+\delta^{\frac32}).
\label{est:0305}
\end{align}
Here $c$ and $C>0$ are absolute constants.
\\
{\rm (ii)}\,There are absolute constants $\epsilon_*$, $c_0$, $c_1$, and $C_2>0$ such that the following holds.
If $N_R \le \epsilon_*$
for some $R \le \min\{\sqrt{T_2}, \frac13 \}$
with $T_2=\min(T_0,\, \frac{c_1}{1+\|v_0\|_{L^2_{\uloc}}^{12}}) $ , then $v$
is regular in the set
 $$
 \Pi:=\left\{ (x,t) \in B_{\frac12} \times [R^2,T_2];\, c_0{N}_R^2|x|^2\le t \right\}
 $$
 and  satisfies
\begin{align}
 |v(x,t)| &\le \frac{C_2}{\sqrt{t}} \qquad \mbox{for} \ (x,t)
\in \Pi.
\end{align}

\end{theorem}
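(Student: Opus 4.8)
The plan is to treat Theorem \ref{large data} as the global, uniformly-local counterpart of Theorem \ref{local data}: the only structural difference is that the quantitative bound \eqref{03283} assumed there is now \emph{supplied} on $B_2$ by the a priori estimate of Lemma \ref{apriori uloc}, at the cost of knowing the pressure only modulo a function of time. First I would fix the exponent pair $(s,q)=(2,\tfrac32)$, which satisfies $\tfrac2s+\tfrac3q=3$, and apply Lemma \ref{apriori uloc}. Covering $B_2$ by finitely many unit balls and combining the energy bound \eqref{ineq.apriorilocal} with the pressure bound \eqref{p.apriori} produces a single function $c_2(t)$ and an absolute constant $C$ so that, for $t\le T_{\uloc}$,
\[
\norm{v}^2_{L^\infty(0,t;L^2(B_2))\cap L^2(0,t;H^1(B_2))}+\norm{p-c_2}_{L^2(0,t;L^{3/2}(B_2))}\le L:=C\|v_0\|^2_{L^2_{\uloc}}.
\]
This is precisely the hypothesis \eqref{L} of Remark \ref{0412}, now with the \emph{stronger} $L^2_tL^{3/2}_x$ pressure regularity and with $L$ explicit in $\|v_0\|_{L^2_{\uloc}}$; I would keep the absolute constants small enough that the resulting $T_1$ stays below $T_{\uloc}$, so that this bound is available on the whole time interval used below.

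For Part (i) I would then rerun the proof of Theorem \ref{local data}(i) with $p$ replaced by $p-c_2(t)$. Two facts make the shift harmless. In the local energy inequality the constant drops out, since $\int c_2(t)\,v\cdot\nabla\varphi^2\,dx=0$ by $\div v=0$; and in the pressure splitting $p-c_2=\tilde p+(p_h-c_2)$ the Calder\'on--Zygmund bound for $\tilde p$ is untouched while $p_h-c_2$ is still harmonic, so the sub-mean-value estimate $\int_{B_{2r}}|p_h-c_2|^{3/2}\le C(r/\rho)^3\int_{B_\rho}|p_h-c_2|^{3/2}$ (valid because $|u|^{3/2}$ is subharmonic for harmonic $u$) survives. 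Feeding $\int_0^t\int_{B_1}|p-c_2|^{3/2}\le t^{1/4}L^{3/2}$ into the Case II estimate as in Remark \ref{0412} removes the restriction $r_1\le c\delta/M^{3/2}$ and lets $r_1=\tfrac13$; Gronwall (Lemma \ref{gronwall}) then gives \eqref{A_r} for the full range $r\in[R,\tfrac13]$ with $T_1=\tfrac{c\min(1,\delta^4)}{1+L^6}=\tfrac{c\min(1,\delta^4)}{1+\|v_0\|^{12}_{L^2_{\uloc}}}$, and the same interpolation that produced \eqref{0423} yields \eqref{est:0305}.

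For Part (ii) the argument mirrors Theorem \ref{local data}(ii). For each $x_0\in B_{1/2}$ I would rescale $v_{x_0}(x,t)=\rho\,v(x_0+\rho x,\rho^2t)$ with $\rho=1-|x_0|\in[\tfrac12,1]$, use Lemma \ref{lemmaA} to transfer the smallness $N_R\le\epsilon_*$ of the data at the origin into smallness of the scaled energy of $v_{x_0}(\cdot,0)$ on off-centre balls, and invoke Part (i) for $v_{x_0}$ to bound the scaled $\int|v|^3+|p-c_2|^{3/2}$ by $C(1+\delta^2)(\delta+\delta^{3/2})$, which is $\le\epsilon_{\CKN}$ once $\delta$ is fixed small (say $\delta=\delta_0$, with $\epsilon_*=\delta_0/10$ ensuring $\delta\ge 5N_R$), on balls $B_r(x_0)$ with $r\in[\max\{R,c N_R|x_0|\},\,c\rho R_1]$. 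Since $(v,p-c_2(t))$ is again a suitable weak solution, Lemma \ref{CKN-Prop1} applies to it and gives $|v(x_0,t_0)|\le C_{\CKN}\,t_0^{-1/2}$ for every $t_0=r^2$ in the admissible range; tracking the endpoints exactly as in Theorem \ref{local data}(ii) identifies the regular set $\Pi$ and the bound $|v|\le C_2/\sqrt t$, the terminal time now being $T_2=\min(T_0,\tfrac{c_1}{1+\|v_0\|^{12}_{L^2_{\uloc}}})$ inherited from $T_1$.

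The main obstacle is the non-local pressure in the uniformly-local setting: in contrast to Theorem \ref{local data} there is no a priori control of $p$ itself, only of $p-c_{x_0}(t)$. The proposal handles this by carrying the time-dependent shift $c_2(t)$ through every step --- legitimate in the energy inequality by $\div v=0$, in the pressure decomposition by subharmonicity of $|p_h-c_2|^{3/2}$, and in the $\epsilon$-regularity criterion because $(v,p-c_2(t))$ remains a suitable weak solution --- and by extracting the sharper $L^2_tL^{3/2}_x$ pressure estimate of Lemma \ref{apriori uloc}, which through Remark \ref{0412} is exactly what yields the explicit times $T_1,T_2$ with the $12$th power of $\|v_0\|_{L^2_{\uloc}}$.
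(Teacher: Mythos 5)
Your proposal is correct and follows essentially the same route as the paper's own proof: apply Lemma \ref{apriori uloc} with $(s,q)=(2,\tfrac32)$ to verify hypothesis \eqref{L}, observe that $(v,p-c_2(t))$ is a suitable weak solution on $B_2$, and invoke Remark \ref{0412} to replace $T(M,\delta)$ by $T'(L,\delta)=\frac{c\min\{1,\delta^4\}}{1+L^6}$ with $L\sim\|v_0\|_{L^2_{\uloc}}^2$, after which Part (ii) repeats the argument of Theorem \ref{local data}(ii). (The paper's appendix gives a second, independent proof via the pressure decomposition \eqref{decomposition}, but your approach matches the main text; the only cosmetic difference is that you cover $B_2$ by unit balls where the paper simply uses the $B_r$ version of Lemma \ref{apriori uloc} with $r=2$, avoiding the need to patch the constants $c_{x_0}(t)$.)
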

\begin{proof}
By Lemma \ref{apriori uloc} with $(s,q)=(2,3/2)$,
\begin{equation*}
\esssup_{0\leq t \leq T_{\uloc} }\int_{B_2}|v|^2  \,dx +  \int_0^{T_{\uloc}}\!\!\int_{B_2} |\nabla  v|^2\,dx\,dt \le 2\|v_0\|_{L^2_{\uloc}}^2,
\end{equation*}
\begin{equation*}
\norm{p -c_2(t)}_{L^2(0,T_{\uloc}; L^{\frac32}(B_2))} \le C(2,3/2)\|v_0\|_{L^2_{\uloc}}^2.
\end{equation*}
The pair
 $(v, p-c_2(t))$ is a suitable weak solution in $B_2\times (0,\min(T_0,T_{\uloc}))$ and it
satisfies the assumptions in \eqref{L}
with $L=(2+C(2,3/2))\|v_0\|_{L^2_{\uloc}}^2$.
Hence from Remark \ref{0412}, we see \eqref{est:03302} holds
with $T$ replaced by
\[
T'= \min \bket{T_0,\,\frac{c\min \{ 1,\delta^4 \}}{1+\|v_0\|_{L^2_{\uloc}}^{12}}}
\le  \min \bket{T_0, T_{\uloc},\,\frac{c\min \{ 1,\delta^4 \}}{1+L^6}},
\]
with $c>0$ sufficiently small.
Thus we obtain \eqref{A_r} as desired. 
We omit the proof
of the remaining claim, since it can be proved in exactly the
same way as in Theorem  \ref{local data}.
\end{proof}
\begin{remark} 
In the appendix, we provide an alternative proof of
Theorem \ref{large data} based on the pressure decomposition \eqref{decomposition} 
instead of the observation in Remark \ref{0412}, 
which seems to be of independent interest.
\end{remark}

\section{Applications}
\label{applications}
\subsection{Regular set}

We now show Theorem \ref{0311}, which contains 
Theorem \ref{thm:0430} as a special case and is useful
for further applications.
To this end, define $\dot{N}_R$ by
\begin{equation*}
\dot{N}_R:=\sup_{r>R}\frac{1}{r}\int_{B_r(0)} \abs{v_0}^2  \qquad (R\ge 0).
\end{equation*}

\begin{theorem}\label{0311}
Let $(v,p)$ be a local energy solution in $\R^3 \times (0,\infty)$
for the initial data $v_0 \in L^2_{\uloc}(\R^3)$. Let $\epsilon_*$ and $c_0$ be the absolute constants in Theorem \ref{large data} (ii).
The following statements hold:
\\
{\rm (i)}\
If $v_0$ satisfies \eqref{decay'}, i.e.,
\begin{equation*}
M_1:=\sup_{x_0\in \R^3}\sup_{r\ge 1}\frac{1}{r}\int_{B_r(x_0)} \abs{v_0}^2 dx<\infty,
\end{equation*}
and if
\begin{equation}
\label{03112}
\dot{N}_R\le \epsilon_* \qquad \mbox{for \ some} \ R\ge 0,
\end{equation}
then $v$ is regular in the set
$\bket{(x,t) \in \R^3\times (0, \infty)\,:\,
\max \{R^2, c_0\dot{N}_R^2 \abs{x}^2 \} \le t}$.
\\
{\rm (ii)}\,Suppose $v_0 \in L^{2,-1}(\R^3)$. For any $0<\de\le \e_*$, there exist
positive constant $T(v_0,\de)$ such that $v$ is regular in the set
\begin{equation}
\label{0706}
\bket{(x,t) \in \R^3 \times (0,\infty):~c_0 \de^2 |x|^2\le t\le T(v_0,\de)}.
\end{equation}

 If $v_0 \in L^{2,-1}(\R^3)$ also satisfies \eqref{decay'}, then for any $\de \in (0,\e_*]$, there is $T'(v_0,\de)$ such that 
$v$ is regular in
\begin{equation}
\label{0707}
\bket{(x,t) \in \R^3 \times (0,\infty):~ \max(c_0\de^2 |x|^2, T'(v_0,\de)) \le t}.
\end{equation}
It is also regular in 
\begin{equation}
\label{0707b}
\{(x,t)\in \R^3 \times (0,\infty):~c_0\epsilon_*^2|x|^2\le t<r_*^2 T_2(M_2)  \},
\end{equation}
with $r_*:=\sup\{r>0  \mid \|v_0\|_{L^{2,-1}(B_r)}^{2} \le \epsilon_* \}$, $0<r_* \le \infty$, and $M_2= [(\max(1,\frac1{r_*})M_1]^{1/2}$. When $r_*=\infty$, i.e.,
$ \|v_0\|_{L^{2,-1}(\R^3)}^2 \le \epsilon_*$,
the regular set \eqref{0707b} has no time upper bound.

\end{theorem}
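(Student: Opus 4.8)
The plan is to derive every claim from the local energy estimate Theorem \ref{large data}(ii) together with the scaling symmetry $v \mapsto v^\lambda(x,t)=\lambda v(\lambda x,\lambda^2 t)$, $v_0\mapsto v_0^\lambda(x)=\lambda v_0(\lambda x)$, under which the scaled energy at radius $r$ turns into that at radius $\lambda r$. Two bookkeeping facts about the data carry the argument. First, under \eqref{decay'} one has $\|v_0^\lambda\|_{L^2_{\uloc}}^2=\sup_{z}\frac1\lambda\int_{B_\lambda(z)}\abs{v_0}^2\le M_1$ for every $\lambda\ge1$; this uniform bound as we scale \emph{up} keeps the existence time controlled, $T_2^\lambda\ge c_1/(1+M_1^6)=:T_2^*$, independently of $\lambda$. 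Second, $N_{R/\lambda}(v_0^\lambda)=\sup_{\lambda R<s\le\lambda}\frac1s\int_{B_s}\abs{v_0}^2$, so $\dot{N}_R\le\epsilon_*$ forces $N_{R/\lambda}(v_0^\lambda)\le\dot{N}_R\le\epsilon_*$ for all $\lambda\ge1$.

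For part (i) I would fix $(y,s)$ with $s\ge\max\{R^2,c_0\dot{N}_R^2\abs{y}^2\}$ and apply Theorem \ref{large data}(ii) to $v^\lambda$ with cutoff $R/\lambda$, choosing $\lambda$ so large that $R/\lambda\le\min\{\sqrt{T_2^\lambda},1/3\}$, $\abs{y}<\lambda/2$, and $s\le\lambda^2T_2^*\le\lambda^2T_2^\lambda$. The rescaled regular set $\bket{(x,t)\in B_{1/2}\times[(R/\lambda)^2,T_2^\lambda]:c_0N_{R/\lambda}(v_0^\lambda)^2\abs{x}^2\le t}$ then contains $(y/\lambda,s/\lambda^2)$, since $N_{R/\lambda}(v_0^\lambda)\le\dot{N}_R$ and $(R/\lambda)^2\lambda^2=R^2\le s$; undoing the scaling gives regularity of $v$ at $(y,s)$. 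As such a $\lambda$ exists for every admissible $(y,s)$, the union over $\lambda\ge1$ exhausts $\bket{\max\{R^2,c_0\dot{N}_R^2\abs{x}^2\}\le t}$. Note the lower time endpoint $R^2$ is scale-independent while $\lambda^2T_2^\lambda\to\infty$.

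For part (ii) I would use two properties of $L^{2,-1}$: the pointwise bound $\frac1r\int_{B_r}\abs{v_0}^2\le\int_{B_r}\abs{x}^{-1}\abs{v_0}^2\le\|v_0\|_{L^{2,-1}(B_r)}^2$, giving $\dot N_0\le\|v_0\|_{L^{2,-1}}^2$ and, by absolute continuity, $\sup_{0<s\le r}\frac1s\int_{B_s}\abs{v_0}^2\le\|v_0\|_{L^{2,-1}(B_r)}^2\to0$ as $r\to0$; and the decay $\frac1r\int_{B_r}\abs{v_0}^2\to0$ as $r\to\infty$, which follows from the dyadic recursion $a_r\le\tfrac12 a_{r/2}+\int_{B_r\setminus B_{r/2}}\abs{x}^{-1}\abs{v_0}^2$ (whence $\limsup a_r\le\tfrac12\limsup a_r$) and yields $\dot N_R\to0$ as $R\to\infty$. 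The bounded-time claim \eqref{0706} follows by scaling with $\lambda=\min\{r_\delta,1\}$, where $r_\delta=\sup\{r:\|v_0\|_{L^{2,-1}(B_r)}^2\le\delta\}$, and applying Theorem \ref{large data}(ii) with $R=0$, so $N_0(v_0^\lambda)\le\delta\le\epsilon_*$; the finite window $B_{\lambda/2}$ and finite $T_2^\lambda$ force the cap $T(v_0,\delta)=\min\{\lambda^2T_2^\lambda,\,c_0\delta^2\lambda^2/4\}$, exactly what keeps the cone $c_0\delta^2\abs{x}^2\le t$ inside $B_{\lambda/2}$. For \eqref{0707} I choose $R$ with $\dot N_R\le\delta\le\epsilon_*$ (possible by the decay above) and invoke part (i), whose paraboloid $c_0\dot N_R^2\abs{x}^2\le c_0\delta^2\abs{x}^2$ and lower time bound $T'(v_0,\delta)=R^2$ give the claim. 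Finally \eqref{0707b} is the single rescaling $\lambda=r_*$: then $N_0(v_0^{r_*})\le\epsilon_*$ because $\|v_0\|_{L^{2,-1}(B_{r_*})}^2\le\epsilon_*$, while $\|v_0^{r_*}\|_{L^2_{\uloc}}^2\le\max\{1,1/r_*\}M_1=M_2^2$ by \eqref{decay'}, so Theorem \ref{large data}(ii) gives regularity up to $r_*^2T_2(M_2)$; when $r_*=\infty$ one has $\|v_0\|_{L^{2,-1}}^2\le\epsilon_*$, hence $\dot N_0\le\epsilon_*$, and part (i) removes the time cap.

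The main obstacle is the passage from the \emph{bounded} regular sets of Theorem \ref{large data}(ii) to the \emph{unbounded}-in-time (and space) sets claimed here: this is precisely where the uniform bound $\|v_0^\lambda\|_{L^2_{\uloc}}^2\le M_1$ for $\lambda\ge1$ under \eqref{decay'} is essential, and where one must verify that the lower endpoint $R^2$ is scale-independent while $\lambda^2T_2^\lambda\to\infty$. The other delicate point is the decay $\dot N_R\to0$ for $L^{2,-1}$ data, for which the dyadic recursion (rather than the crude bound $\frac1r\int_{B_r}\abs{v_0}^2\le\|v_0\|_{L^{2,-1}}^2$) is needed; and in \eqref{0707b} one should check, possibly after shrinking the absolute constant $c_1$ so that $T_2(M_2)\le c_0\epsilon_*^2/4$, that the cone $c_0\epsilon_*^2\abs{x}^2\le t<r_*^2T_2(M_2)$ stays within the rescaled ball $B_{r_*/2}$, so that the spatial cutoff to $B_{1/2}$ in Theorem \ref{large data}(ii) is harmless.
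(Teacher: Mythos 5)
Your argument is correct and is essentially the paper's own proof: both rest on the rescaling $v\mapsto\lambda v(\lambda\,\cdot,\lambda^2\,\cdot)$ combined with Theorem \ref{large data}(ii), sending $\lambda\to\infty$ (using the $\lambda$-uniform $L^2_{\uloc}$ bound supplied by \eqref{decay'}) to exhaust the unbounded set in (i), and using the fixed scales $\lambda=R_0$ and $\lambda=r_*$ for \eqref{0706} and \eqref{0707b}. The only differences are cosmetic: you prove $\dot{N}_R\to0$ by a dyadic recursion where the paper splits $\frac1r\int_{B_r}|v_0|^2$ into a core term and a weighted tail, and you are more explicit than the paper in capping $T(v_0,\delta)$ so that the paraboloid stays inside the rescaled ball $B_{\lambda/2}$ (a worthwhile precision, since Theorem \ref{large data}(ii) only gives regularity for $x\in B_{1/2}$).
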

Theorem \ref{0311} is more general than Theorem \ref{thm:0430} since it
assumes $\dot{N}_R<\epsilon_*$  for some $R\ge 0$ while Theorem \ref{thm:0430} 
assumes $\dot{N}_0<\epsilon_*$.
The regular set in Part (i) does not depend on the value of $M_1$, although it needs $M_1<\infty$.
If $v_0 \in L^{2,-1}(\R^3)$ also satisfies \eqref{decay'}, $v$ is regular in the union of the two sets \eqref{0706} and \eqref{0707}.
These sets are inside the parabola $c_0\de^2|x|^2 \le t$ with arbitrarily small $\de>0$.
The set \eqref{0707} is similar to the \emph{eventual regularity} result given in \cite[Theorem 1.4]{BKT}, which contains an additional a priori bound assumption on the solution,
available for solutions constructed in \cite{BKT} for $v_0$ in certain Morrey type space. That bound plays a similar role as the assumption \eqref{decay'} for $v_0$.

\begin{proof}
(i)  We first consider the case $R>0$.
Let $u_0(x)=\lambda v_0(\lambda x)$,
$u(x,t)=\lambda v(\lambda x, \lambda^2 t)$ for $\lambda>2R$. By the assumption, we have
\[
\sup_{\frac{R}{\lambda}\le r<1}\frac{1}{r} \int_{B_r} \abs{u_0}^2\le \dot{N}_R \le \epsilon_* ,
\]
and
\begin{equation}
\label{0707c}
\norm{u_0}_{L^2_{\rm uloc}} = \sup_{x_0\in\R^3} \bke{\frac1{\la}
\int_{|x-x_0|< \la} |v_0(x)|^2 dx}^{\!1/2}
\le \max\bke{\frac1{R}\norm{v_0}_{L^2_\uloc}^2, M_1}^{\!1/2}=:C_R,
\end{equation}
with
$C_{R}>0$ independent of $\lambda$.
By Theorem \ref{large data}\,(ii), there exists $T_2'=T_2'(C_R)$
independent of $\lambda$ such that
 $u$ is regular if $
\max \{R^2/\lambda^2, c_0\dot{N}_R^2 \abs{x}^2\} \le t\le T_2'$
for $\lambda \ge \frac{R}{\sqrt{T_2'}}$.
Scaling back we see that
$v$ is regular if  $\max \{R^2, c_0\dot{N}_R^2 \abs{x}^2\} \le t \le\lambda^2 T_2'$.
Since $\lambda>\max\{2R, \frac{R}{\sqrt{T_2'}} \}$ is arbitrary, $v$ is regular in the 
set  $\{(x,t):\,$$\max \{R^2, c_0\dot{N}_R^2 \abs{x}^2\}\le t \}$. This proves the case $R>0$.
For the case $R=0$, by $\dot{N}_r \le \dot{N}_0$, the above argument shows $v$ is regular in
$\{(x,t):\,$$\max \{r^2, c_0\dot{N}_r^2 \abs{x}^2\}\le t \}$.
Since $r>0$ is arbitrary, we conclude the proof.

\medskip
(ii) Suppose now $v_0 \in L^{2,-1}(\R^3)$. For any $\de \in (0,\e_*]$, there exists $R_0>0$ such that
$$
\sup_{0<r\le R_0} \frac{1}{r}\int_{B_r} \abs{v_0}^2 dx
\le
\int_{B_{R_0}} \frac{|v_0|^2}{|x|}dx \le \de.
$$
Let $u_0(x)=\lambda v_0(\lambda x)$ and $u(x,t)=\lambda v(\lambda x,\lambda^2 t)$ with $\lambda=R_0$.
We easily see $u_0 \in L^2_{\uloc}$ and
$\sup_{0<r\le 1} \frac{1}{r}\int_{B_r} \abs{u_0}^2 dx \le \de\le \epsilon_*$. By Theorem \ref{large data} (ii),
$u$ is regular if $c_0\delta^2|x|^2\le t\le T_2(u_0)$.  Hence $v$ is regular in the set $\{(x,t)\mid c_0\delta^2|x|^2\le t\le R_0^2T_2(u_0)\}$.
Note $T_2(u_0)$ depends on both $R_0$ and $\norm{v_0}_{L^2_\uloc}$ and goes to zero rapidly as $R_0 \to 0$.

Suppose now $v_0 \in L^{2,-1}$ also satisfies \eqref{decay'}.
There is $\rho>0$ such that $ \int_{\R^3\setminus B_\rho } \frac{1}{|x|} |v_0|^2\le \de/2$. Let
$R_1=\max(\rho, \frac {2}{\de}  \int_{B_\rho }  |v_0|^2)$. 
For any $r \ge R_1$, we have
\[
\frac 1r \int_{B_r} |v_0|^2 \le \frac 1r \int_{B_\rho} |v_0|^2  + \int_{B_r\setminus B_\rho } \frac{1}{|x|} |v_0|^2
\le \frac \de2 + \frac \de2 .
\]
Thus $\dot N_{R_1}\le\de$. By Part (i), $v$ is regular in the set $\bket{ \max(R_1^2, c_0 \de^2 |x|^2)\le t}$.

The remaining statement follows by
choosing $\de=\epsilon_*$, $\lambda=R_0\to r_*$, and noting $\norm{u_0}_{L^2_\uloc} \le M_2$ using \eqref{0707c}.
\end{proof}

\begin{remark}
\label{0321}
By a standard localization argument and using the a priori estimate derived in this theorem,
it is not difficult to construct a suitable weak solution (but not necessarily
a local energy solution) satisfying the conclusion
in (i) for the data satisfying \eqref{03112} without assuming
neither conditions $v_0 \in L^2_{\uloc}$ nor \eqref{decay'}.
However, since such a result is already studied in \cite{BKT} by
a slightly different approach, we do not discuss this fact in details here.
\end{remark}
\begin{remark}[Initial regular set for general data]
\label{0408}
Following \cite[Comment 4 after Corollary 1.2]{KMT20}, define
\[
\rho(x)= \rho(x;v_0) :=\sup \bket{ r>0: v_0 \in L^2(B_r(x)), \frac1r \int_{B_r(x)} |v_0|^2 \le
\epsilon_*}.
\]
Let $\rho(x)=0$ if such $r$ does not exist, and let $\rho(x)=\infty$ if
$ \sup_{r>0}\frac{1}{r}\int_{B_r(x)} |v_0|^2 \le \epsilon_*$. We also set
\[
T(x) =\sup_{\rho < \rho(x)}\frac{c_2\rho^2}{1+\rho^{-6}\|v_0\|_{L^2_{uloc,\rho}}^{12}}  \in [0,\infty] \qquad \mbox{with} \ \ 
\|v_0\|_{L^2_{uloc,\rho}}:= \sup_{x_0 \in \R^3}
\|v_0\|_{L^2(B_\rho(x_0))}. 
\]
For each $x_0 \in \R^3$, applying Theorem \ref{large data} (ii) to %
$$
u(y,t)=\lambda v(\lambda y+x_0,\lambda^2t)
$$
with $\lambda=\rho(x_0)$ we see that any local energy solution $v$  is regular in the set
\[
\bket{(x,t)\in \R^3 \times (0,\infty):\, 0< t < T(x)}.
\]
This improves  \cite[Comment 4 after Corollary 1.2]{KMT20} because we replace local $L^3$ norm by local Morrey-type norm in the 
definition of $\rho(x)$.
\end{remark}

\medskip

We next apply Theorem \ref{0311} to prove Corollary \ref{thm:weighted}.

\begin{proof}[Proof of Corollary \ref{thm:weighted}]
(i)~By the assumptions, we have $v_0 \in L^2(\R^3)$, 
and hence it satisfies
\eqref{decay'} since
$$\sup_{x_0 \in \R^3} \frac{1}{r}\int_{B_r(x_0)} \abs{v_0}^2
\le \frac{1}{r}\|v_0\|_{L^2}^2
$$
This estimate also implies
\begin{equation}
\label{0402}
\sup_{x_0 \in \R^3,\,r\ge  R_*} \frac{1}{r}\int_{B_r(x_0)} \abs{v_0}^2
\le \epsilon_*
\qquad  \mbox{with} \  R_*:=\frac{\|v_0\|_{L^2}^2}{\epsilon_*}.
\end{equation}
Therefore applying Theorem \ref{0311}\,(i) to 
$v_{x_0}(x,t):=v(x+x_0,t)$
  for each $x_0 \in \R^3$, we see
\[
\bket{(x,t) \in \R^3\times (0, \infty)\,:\,
\max \{R_*^2, c_0\dot{N}_{R_*}^2 \abs{x-x_0}^2 \} \le t}
\]
is the regular set of $v$. Since $x_0 \in \R^3$ is arbitrary,
this shows that
$v$ is regular for $t \ge R_*^2=\|v_0\|_{L^2}^4/\epsilon_*^2$.

We next show that
 there exists
$M=M(\|v_0\|_{L^{2,\alpha}})$ such that if $|x_0| \ge 2R_*$,
\begin{equation}
\label{03152}
\sup_{M|x_0|^{-\alpha}\le r\le R_*} \frac{1}{r}\int_{B_r(x_0)}|v_0|^2 \le \epsilon_*.
\end{equation}
Indeed since $r \le R_* \le |x_0|/2$, we see
\begin{align*}
\frac{1}{r}\int_{B_r(x_0)} \abs{v_0}^2
=
\frac{1}{r}\int_{B_r(x_0)} \frac{|x|^{\alpha}|v_0|^2}{|x|^{\alpha}}
\le
\frac{2^\al}{r|x_0|^{\alpha}}\|v_0\|_{L^{2,\alpha}}^2,
\end{align*}
from which \eqref{03152}  follows with
$M=\frac{{2^\al} \|v_0\|^2_{L^{2,\alpha}}}{\epsilon_*}$.
Combining this with \eqref{0402} implies
\[
\sup_{M|x_0|^{-\alpha} \le r \frac{1}{r}\int_{B_r(x_0)} } \abs{v_0}^2\le
\epsilon_*
\qquad
\mbox{provided} \ |x_0|\ge 2R_*.
\]
Then if $|x_0| \ge 2R_*$, we may apply Theorem \ref{0311}\,(i)
for $v_{x_0}$ with $R=M|x_0|^{-\alpha}$, to see that
$v$ is regular at $(x_0,t)$ for $t \ge M^2|x_0|^{-2\alpha}$.
Since $v$ is also regular for $t \ge R_*^2=\|v_0\|_{L^2}^4/\epsilon_*^2$, 
this finishes the proof of  (i) of Corollary \ref{thm:weighted}, with $K=\max(M^2,4^\al R_*^{2+2\al})$.
Note that $K|x|^{-2\al}\ge R_*^2$ when $|x|\le 2R_*$.

\medskip

(ii)~Now $\al\in (-1,0)$. We use similar approach as above: If $r \ge |x_0|/2$,
\begin{align*}
\frac{1}{r}\int_{B_r(x_0)} \abs{v_0}^2
\le
\frac{1}{r}\int_{B_{3r}} \abs{v_0}^2
=
\frac{1}{r}\int_{B_{3r}} \frac{|x|^{\alpha}\abs{v_0}^2}{|x|^{\alpha}}
\le
\frac{C_1}{r^{1+\alpha}}\|v_0\|^2_{L^{2,\alpha}}.
\end{align*}
Hence we have
\begin{equation}
\label{701}
 \frac{1}{r}\int_{B_r(x_0)} \abs{v_0}^2 \le \epsilon_*
\qquad \mbox{if} \  \ r \ge \max \left\{\frac{|x_0|}{2}, \left(\frac{C_1\|v_0\|_{L^{2,\alpha}}^2}{\epsilon_*}\right)^{\frac{1}{1+\alpha}} \right\}.
\end{equation}
Therefore, by virtue of \eqref{decay'}, we may use Theorem \ref{0311} (i) to see that
\begin{equation}
\label{04022}
v \ \mbox{is regular at} \  (x_0,t) \ \ \mbox{if} \ \  t \ge
\max \left\{\frac{|x_0|^2}{4}, \left(\frac{C_1\|v_0\|_{L^{2,\alpha}}^2}{\epsilon_*}\right)^{\frac{2}{1+\alpha}} \right\}.
\end{equation}
For $r \le |x_0|/2$, we have 
\begin{align*}
\frac{1}{r}\int_{B_r(x_0)} \abs{v_0}^2
\le
\frac{C_2}{r|x_0|^{\alpha}} \int_{B_r(x_0)}|x|^{\alpha} \abs{v_0}^2
\le
\frac{C_2}{r|x_0|^{\alpha}} \|v_0\|^2_{L^{2,\alpha}},
\end{align*}
and hence
\begin{align}
\label{702}
\frac{1}{r}\int_{B_r(x_0)} \abs{v_0}^2  \le \epsilon_* \qquad
\mbox{if} \ \  \frac{C_2\|v_0\|_{L^{2,\alpha}}^2}{|x_0|^{\alpha}\epsilon_*}  \le r \le \frac{|x_0|}{2}.
\end{align}
We may increase $C_1$ so that when  
$\frac{|x_0|}{2} \ge \left(\frac{C_1\|v_0\|_{L^{2,\alpha}}^2}{\epsilon_*}\right)^{\frac{1}{1+\alpha}}$, then $\frac{|x_0|}{2} \ge \frac{C_2\|v_0\|_{L^{2,\alpha}}^2}{|x_0|^{\alpha}\epsilon_*}$. For such $x_0$, \eqref{701} and \eqref{702} imply
\begin{align*}
\frac{1}{r}\int_{B_r(x_0)} \abs{v_0}^2  \le \epsilon_* \qquad
\mbox{if} \ \  r \ge \frac{C_2\|v_0\|_{L^{2,\alpha}}^2}{|x_0|^{\alpha}\epsilon_*}.
\end{align*}
Thus Theorem \ref{0311}\,(i) shows 
$v$ is regular for $t \ge 
\frac{C_2^2 \|v_0\|_{L^{2,\alpha}}^4}{|x_0|^{2\alpha}\epsilon_*^2}$.
This and \eqref{04022} show Part (ii).
 \end{proof}
 
\subsection{Energy concentration}
We now prove Theorem \ref{concentration} on the energy concentration at a blow up time by using Theorem \ref{large data}. 
\begin{proof}[Proof of Theorem \ref{concentration}]
(i)~Let $u$ be a local energy solution, and $\epsilon_*$ the small constant of Theorem \ref{large data}\,(ii). 
If $N_R^*:=\sup_{x \in \R^3,\,R\le r\le 1} \int_{B_1(x)} |u_0|^2 \le \epsilon_*$, then $u$ is regular in
$\R^3 \times [R^2,T)$ with $T:= \frac{c_1}{1+\epsilon_*^{12}}$.
Let $S_0=\frac{T}{2} \le \frac14$.

Suppose that \eqref{04082} is false. Then there exists
$t_1 <T_*$ such that
$$
\sup_{x_0\in \R^3}
\frac{1}{\sqrt{T_* -t_1}}
\int_{B_{\sqrt{\frac{T_*-t_1}{S}}}(x_0)}\abs{v(x,t_1)}^2dx \le \epsilon_*.
$$
Let  $\lambda=\sqrt{\frac{T_*-t_1}{S_0}}$
and let $u(x,t):=\lambda v(\lambda x,\lambda^2 t+t_1)$ and $u_0(x)=u(x,0)$.
Then $u$ satisfies
$$
N_{\sqrt{S_0}}^*=\sup_{x \in \R^3,\,\sqrt{S_0}\le r \le 1}\frac{1}{r}\int_{B_r(x)} |u_0|^2
\le
\sup_{x \in \R^3} \frac{1}{\sqrt{S_0}}\int_{B_1(x)} |u_0|^2 \le \epsilon_*,
$$
which implies that $u$ is regular in $\R^3 \times [\frac{T}{2},T)$.
Rescaling back, $v$ is regular in $\R^3 \times [T_*,2T_*-t_1)$.
This shows $v$ can be smoothly extended beyond
$t=T_*$ and shows the desired contradiction.

\medskip
(ii)~Although the proof mostly follow the outlines of those of (i) and
\cite[Theorem 2]{BP}, we will give the details, since some modification  is needed.

Let $T_2=T_2(M_*^{\frac12})<1$ be the constant in Theorem \ref{large data}\,(ii).
Let $S=T_2/2$ and $\de_*=\min(Sr_0^2,T_*)$.
 Assume the contrary so that
there exists a local energy solution $v$ which is singular at the point $(0,T_*)$, satisfies the type I condition \eqref{0318}, and
\begin{align}
\label{0319}
\frac{1}{\sqrt{T_* -t_0}}\int_{B_{\sqrt{\frac{T_*-t_0}{S}}}}\abs{v(x,t_0)}^2
\le \epsilon_*
\end{align}
 for some $t_0 \in (T_*-\de_*,T_*)$.
Let
$\lambda= \sqrt{\frac{T_*-t_0}{S}}$
and $u(x,t)=\lambda v(\lambda x, \lambda^2t +t_0)$.
We see
$\frac{1}{\sqrt{S}}\int_{B_1}\abs{u(x,0)}^2dx \le \epsilon_*$
since
$$
\int_{B_1} \abs{u(x,0)}^2dx =
\lambda^{-1} \int_{B_\lambda} \abs{v(y,t_0)}^2dy
=
 \sqrt{\frac{S}{T_*-t_0}} \int_{B_{\sqrt{\frac{T_*-t_0}{S}}}}
  \abs{v(y,t_0)}^2dy.
$$
Therefore 
$$
\sup_{\sqrt{\frac{T_2}{2}} \le r\le 1} \frac{1}{r}
\int_{B_r} |u(x,0)|^2 \le \epsilon_*.
$$
On the other hand,  we observe that
$$
\sup_{x_0\in \R^3} \int_{B_1(x_0)} |u(x,0)|^2=
\sup_{y_0\in \R^3} \lambda^{-1}\int_{B_{\lambda}(y_0)}|v(y,t_0)|^2dy
\le M_*.
$$
In the last inequality, \eqref{0318} can be used since $r=\lambda \le r_0$ and $T_*-r^2 < t_0 < T_*$.

Hence we may apply Theorem \ref{large data}\,(ii) to see that
$u$ is regular at $(0,t)$ for $t \in [T_2/2,T_2)$. Hence $v$ is regular at $(0,t)$ for $t \in [T_*, 2T_*-t_0)$.
This contradicts the assumption that $v$ is singular at $(0,T_*)$.
\end{proof}

\subsection{Regularity of discretely self-similar solutions}
\label{DSS}

Based on Theorem \ref{large data} and the proof of \cite[Corollary 1.5]{KMT20}, 
we now prove Theorem \ref{thm:DSS}. 
\begin{proof}[Proof of Theorem \ref{thm:DSS}](i)~%
For any $r>0$, let $k$ be the unique integer such that
$\lambda^{k-1} \le r < \lambda^k$, then by virtue of the discretely  
self-similarity,
\begin{align}
\label{0703}
\frac{1}{r} \int_{B_r(x_0)}|v_0|^2
&\le
\frac{1}{\lambda^{k-1}} \int_{B_{\lambda^k}(x_0)}|v_0|^2
=
\lambda \int_{B_{1}(\frac{x_0}{\lambda^k})}
|v_0(y)|^2dy
\\ \notag
&\le
\lambda \|v_0\|_{L^2_{\uloc}}^2.
\end{align}
Since $x_0$ and $r$ are arbitrary, this proves \eqref{04202}
with $M:=\norm{v_0}_{M^{2,1}} \le \sqrt \la \norm{v_0}_{L^2_\uloc}$.

Since $v_0 \in E^2$, there exists $R_*=R_*(v_0,\lambda)>0$ 
such that 
\begin{equation}
\|v_0\|_{L^2(B_1(x))}^2 \le \frac{\epsilon_*}{\lambda}
\qquad \mbox{if} \ |x| \ge R_*.
\label{0420}
\end{equation}
If $0<r\le \frac{|x_0|}{\lambda R_*}$ and $\lambda^{k-1} \le r < \lambda^k$, then
\[
\frac{|x_0|}{\lambda^{k}} \ge \frac{R_* r}{\la^{k-1}} \ge R_*.
\]
By \eqref{0703} and \eqref{0420},
we obtain
\begin{equation*}
\sup_{r\in (0,\frac{|x_0|}{\lambda R_*}]}
\frac{1}{r}\int_{B_r(x_0)}|v_0|^2
\le
\epsilon_*
\end{equation*}
Since $x_0$ is arbitrary in $\R^3 \backslash \{0\}$
and $R_*$ is independent of $x_0$,
this proves \eqref{0419} with $\mu=\frac{1}{\lambda R_*}$.

Applying Theorem \ref{large data} (ii) to
$u_0(x):=\mu|x_0|v_0(x_0+\mu|x_0|x)$ for each $x_0$, and 
noting that $\|u_0\|_{L^2_{\uloc}}
\le \|v_0\|_{M^{2,1}}$,
we see $u$ is regular at $x=0$ for $t \in (0, T_2(M)]$, that is,
 $v$ is regular at $(x_0,t)$ for $t \in (0, T_2(M)\mu^2|x_0|^2]$.
This concludes the proof of  (i).

\medskip
(ii) As the result of (i), it is direct that
\[
\abs{v(x,t)}\le \frac{C
}{\sqrt{t}}, \qquad 0<t\le C\abs{x}^2.
\]
As in the proof of Corollary 1.5 in \cite{KMT20}, it suffices to show that $v$ is regular in the set
\[
t\ge C\abs{x}^2,\qquad 1\le t\le \lambda^2.
\]
Localizing the solution in the region above, it turns out that the localized solution $\td v$ is a weak solution of the Navier-Stokes equations with a regular source term. Since the weak solutions satisfy the 
classical energy inequality, 
there is time $t_*\in [1, \lambda^2)$  such that the solution is in $H^1$ at the time.
Then due to the local existence of strong solution and the weak-strong uniqueness, there is a constant $\delta>0$ independent of $\mu$
such that $\td v$ is regular in $[t_*, t_*+\delta]$. Therefore, if $\lambda_*$ is taken to be sufficiently small close to $1$, the solution is regular globally in time. 
 For any $t$, there is an integer $k$ such that 
$\lambda^{2k} t \in [1, \lambda^2]$, and we then have, due to scaling invariance, 
\[
\abs{v(x, t)}=\abs{\lambda^k v(\lambda^kx, \lambda^{2k}t)}\le C\lambda^k\le C\frac{\lambda_*}{\sqrt{t}}\le \frac{2C}{\sqrt{t}}.
\qedhere
\]
\end{proof}

\begin{remark}
\label{example}
If  $v_0 \in L^2_{\uloc} \backslash E^2$, 
\eqref{0419} does not hold in general.
To see this we show that there exists a DSS function $F \in L^2_{\uloc} \backslash E^2$ such that 
\begin{equation*}
\sup_{x \in \R^3, r\in(0,\mu|x|] }
\frac{1}{r}\int_{B_r(x)}|F|^2
>
\epsilon_* \qquad \mbox{for \ any} \ \mu \in (0,1).
\end{equation*}
Indeed for $K>\sqrt{ \epsilon_*/4\pi}$, define 
\begin{equation}
\label{05102}
F(x):=\sum_{k \in \Z} \lambda^k f(\lambda^k x)
\qquad 
\mbox{with}
\ 
f(x)=K|x-\lambda^{\frac12}e_1|^{-1}\chi(x)
\end{equation}
where $\chi$ is the characteristic function supported in 
$B_{\lambda} \backslash B_1$. 
This function is given in
\cite[Comments on Theorem 1.2]{BT17} and \cite[Lemma 6.3]{BT19} as an example 
in $M^{2,1} \backslash E^2$ while $L^{3,\infty} \subset E^2 $.
It is also shown in \cite[Lemma 6.1]{BT19} that $\textup{DSS} \cap L^2_\uloc=\textup{DSS} \cap M^{2,1}$.
For any $\mu \in (0, 1-\la^{-1/2})$ we have 
\begin{align*}
\sup_{x \in \R^3,\,r\in(0,\mu|x|] }
\frac{1}{r}\int_{B_r(x)}|F|^2
&\ge 
\frac{1}{\mu\lambda^{\frac12}}
\int_{B_{\mu\lambda^{\frac12}}(\lambda^{\frac12}e_1)} |F|^2
%
%
%
\\
&=4\pi K^2>\epsilon_*.
\end{align*}
Hence \eqref{0419} does not hold for $F$ for any $\mu>0$. 

\end{remark}

\section{Appendix:~alternative proof of Theorem \ref{large data}}
By the definition of the local energy solution, there exists
$c_r=c_r(t)$
such that the pressure admits the following decomposition \eqref{decomposition}:
\begin{align}
p-c_r+\frac{|v|^2}{3}
&=p_{\loc}+p_{nonloc}
\notag
\\
&:=\mbox{p.v.}\int_{B_{3r}} K(x-y) (v\otimes v)(y)dy
+\int_{\R^3 \backslash B_{3r}} (K(x-y)-K(-y) )(v\otimes v)(y)dy.
\label{pressure}
\end{align}
Since $(v, p-c_r)$ is a suitable weak solution to \eqref{NS} in 
$B_{2r}$, 
the local energy inequality with the test function
given in the proof of Theorem \ref{local data}
readily yields 
\begin{align}
E_r(t)
&\le
\frac{2}{2r}\int_{B_{2r}} |v_0|^2 
+
\frac{C}{r^3}\int^t_0\! \int_{B_{2r}}|v|^2
+
\frac{C}{r^2}\int^t_0\!\int_{B_{2r}}\!\!|v|^3+|p_{\loc}|^{\frac32}+
|p_{nonloc}|^{\frac32}
\notag
\\
&=:2N_{2r}+I_{lin} +I_{nonlin}+ I_{ploc}+I_{pnonloc}.
\label{A_r'}
\end{align}
We divide the estimate into two cases.

\medskip
\noindent
{\underline{\bf Case I:~$R\le r\le \frac16$}.}
For the simplicity of notation
Let $\mathcal{E}_r(t):= \sup_{r\le \rho \le 1/2} M_{\rho}(t)$.
In the same way as \eqref{est:0324} and \eqref{est:03043} we have
\begin{align}
I_{lin}
&\le
\frac{C}{R^2}\int^t_0
\mathcal{E}_R(s)ds,
\label{vlin}
\end{align}
\begin{align}
I_{nonlin}
&\le
\epsilon \mathcal{E}_R(t)
+
\frac{C}{r^2} \int^t_0 \mathcal{E}_{R}(s)^3
+ \mathcal{E}_{R}(s)^\frac32 ds.
\label{vnonlin}
\end{align}

For the local pressure term,
the Calder\'on-Zygmund estimate and \eqref{vnonlin} give
\begin{align}
I_{ploc}
&\le
\frac{C}{r^2}\int^t_0 \int_{B_{3r}}\!\!|v|^3
\notag
\\
&\le
\epsilon \mathcal{E}_R(t)
+
\frac{C_{\epsilon}}{R^2} \int^t_0 \mathcal{E}_{R}(s)^3
+ \mathcal{E}_{R}(s)^\frac32 ds.
\label{ploc'}
\end{align}
On the other hand, since $|x-y| \ge |y|/3$
for $x \in B_{2r}$ and $y \in \R^3 \backslash B_{3r}$, we see
\begin{align}
|p_{nonloc}(x)|
&\le
\int_{\R^3 \backslash B_{3r}}
|K(x-y)-K(-y)||v(y)|^2dy
\notag
\\
&\le
Cr\int_{\R^3 \backslash B_{3r}}
\frac{1}{|x-y|^4} |v(y)|^2dy
\notag
\\
&\le
Cr\int_{\R^3 \backslash B_{3r}}
\frac{|v(y)|^2}{|y|^4} dy
\notag
\\
&\le
Cr\sum_{k=2}^{\lfloor -\log_2 r-1 \rfloor} \int_{B_{2^{k}r} \backslash B_{2^{k-1}r}}
\frac{|v(y)|^2}{|y|^4} dy
+
Cr\int_{\R^3 \backslash B_{1/4}}
\frac{|v(y)|^2}{|y|^4}dy
\notag
\\
&\le
\frac{C}{r^2}\sum_{k=2}^{\lfloor -\log_2 r-1 \rfloor}\frac{1}{2^{3k}}
\Big(\frac{1}{2^{k}r} \int_{B_{2^k r}} |v(y)|^2 dy\Big)
+
CE_{\uloc}
\notag
\\
&\le
\frac{C}{r^2}
\mathcal{E}_R
+
C\|v_0\|_{L^2_{\uloc}}^2,
\label{pnonloc}
\end{align}
provided $t\le T_{\uloc}$, where we used Lemma \ref{apriori uloc} in the last line.
We then obtain
\begin{align}
\frac{C}{r^2}\int^t_0 \int_{B_{2r}} |p_{nonloc}|^{\frac32}
&\le
\frac{C}{r^2}\int^t_0 \mathcal{E}_R(s)^{\frac32}ds +
t \|v_0\|^3_{L^2_{\uloc}}
\notag
\\
&\le
\frac{C}{r^2}\int^t_0 \mathcal{E}_R(s)^{\frac32} ds+
\frac{\delta}{10},
\label{pnonloc'}
\end{align}
provided $t \le \min\{ \frac{c\delta}{\|v_0\|_{L^2_{\uloc}}^3},T_{\uloc} \}$
with a small absolute constant $c>0$.
Hence applying \eqref{vlin}, \eqref{vnonlin}, \eqref{ploc'}, and \eqref{pnonloc'}
to \eqref{A_r'},  we obtain
\begin{align*}
\sup_{R\le r <1/6}E_r(t)
&\le
\frac{\delta}{2}
+\frac{C}{R^2} \int^t_0 \mathcal{E}_R(s)
+
\mathcal{E}_R(s)^3 ds
\end{align*}
for $\delta \ge 5N_R$.

\medskip
\noindent
{\underline{\bf Case II:~$\frac16\le r\le \frac12$}.}
In order to bound the right hand side of \eqref{A_r'},
we observe from Lemma \ref{apriori uloc} that
\begin{equation}
\label{est:uloc}
\sup_{1/3\le r\le 2} E_r(t) \le C E_{\uloc}(t) \le C\|v_0\|_{L^2_{\uloc}}^2
\qquad \mbox{holds} \
\mbox{for} \ t\le T_{\uloc}.
\end{equation}
This shows 
\begin{align*}
I_{lin}
&\le
C \int^t_0\sup_{1/3\le r\le1}
M_{r}(s) ds
\\
&\le
Ct \|v_0\|_{L^2_{\uloc}}^2,
\end{align*}
and hence if $t\le \min\{T_{\uloc}, \frac{c\delta}{\|v_0\|_{L^2_{\uloc}}^2}\}$
with a suitable small constant $c>0$, we
have
\begin{align}
\label{vlin2}
I_{lin}&\le
\frac{\delta}{10}.
\end{align}
For the nonlinear term in \eqref{A_r'}, arguing as \eqref{est:03044} we have
\begin{align}
I_{nonlin}
& \le
C \bke{\int_0^t\int_{B_{2}} \abs{ \nabla v}^2}^{\frac34}
\bke{\int^t_0 \bke{\int_{B_{2}} \abs{v}^2}^3}^{\frac{1}{4}}
 +C\int^t_0\bke{\int_{B_{2}} \abs{v}^2}^{\frac{3}{2}}ds
\notag
\\
&\le
C(t^{\frac14} +t) \|v_0\|_{L^2_{\uloc}}^3,\label{est:0310}
\end{align}
from which the Calder\'on-Zygmund estimate also gives
\begin{align}
\label{est:03102}
I_{ploc}&\le
\frac{C}{r^2}\int^t_0\!\! \int_{B_{3r}}\!\!|v|^3
\le
C(t^{\frac14} +t) \|v_0\|_{L^2_{\uloc}}^3.
\end{align}
Thus the right hand sides in \eqref{est:0310} and \eqref{est:03102} are
 bounded by $\frac{\delta}{10}$ provided
$$
t\le \min \left \{T_{\uloc}, \frac{c\delta^4}{\|v_0\|_{L^2_{\uloc}}^{12}},
\frac{c\delta}{\|v_0\|_{L^2_{\uloc}}^{3}} \right\}
$$
with some small absolute constant $c>0$.
On the other hand, in the same way as in \eqref{pnonloc},
we have
\begin{align*}
|p_{nonloc}(x)|
\le
C\int_{\R^3 \backslash B_{\frac12}}
\frac{|v(y)|^2}{|y|^4} dy
&\le
C\|v_0\|_{L^2_{\uloc}}^2,
\end{align*}
which implies
\begin{align*}
I_{pnonloc}
&\le
\frac{\delta}{10} \qquad \mbox{for} \  \ t \le \frac{c\delta}{\|v_0\|_{L^2_{\uloc}}^3}.
\end{align*}
Making use of these estimates in \eqref{A_r'}, we obtain
$$
\sup_{1/6\le r \le 1/2}E_r(t)
\le
\frac{\delta}{2}.
$$

Note here that by choosing $c_1>0$ sufficiently small, we may take
$$
T_1:= \frac{c_1\min\{1, \delta^4 \}}{1+\|v_0\|_{L^2_{\uloc}}^{12}}
\le \min \left\{T_{\uloc}, \frac{c\delta}{\|v_0\|_{L^2_{\uloc}}^2},\frac{c\delta^4}{\|v_0\|_{L^2_{\uloc}}^{12}},
\frac{c\delta}{\|v_0\|_{L^2_{\uloc}}^{3}} \right\}.$$
Therefore combining the conclusions of the cases I and II, we have
\begin{align*}
\mathcal{E}_R(t)
&\le
\frac{\delta}{2}
+\frac{C}{R^2} \int^t_0 \mathcal{E}_R(s)
+
\mathcal{E}_R(s)^3 ds
\end{align*}
 for $t\le T_1$.
Applying Lemma \ref{gronwall}
we obtain
\begin{align}
\label{A_R}
\mathcal{E}_R(t)
&\le \delta \quad  \mbox{for} \ t \le \min \{ \lambda R^2, T_1 \}.
\end{align}
Since $\delta \ge 5N_R \ge 5N_r$
for any $r\in [R,1/2]$, we may replace $R$ by $r$ in \eqref{A_R}, and thus we have
verified \eqref{A_r} for $R \le r \le \frac12$.

Since the remaining proof is the same as Theorem \ref{local data},
we omit the details.
\hfill $\Box$

\renewcommand{\baselinestretch}{0.9}

\section*{Acknowledgments}
We warmly thank Zachary Bradshaw and Christophe Prange for very helpful suggestions.
The research of Kang was partially supported by NRF-2019R1A2C1084685. The research of Miura was partially supported
by JSPS grant 17K05312. The research of Tsai was partially supported
by NSERC grant RGPIN-2018-04137.


\begin{thebibliography}{XX}


\bibitem{BP}
Barker, T. and Prange, C., Localized smoothing for the Navier-Stokes equations and concentration of critical norms near singularities, to appear in Arch. Rational Math. Anal.,
arXiv preprint:1812.09115.

\bibitem{BGHG} Bedrossian, J., Germain, P., 
and Harrop-Griffiths, B., 
Vortex filament solutions of the Navier-Stokes equations,
 arXiv preprint:1809.04109, 2018.
 
 \bibitem{B} Bourgain, J., 
 Refinements of Strichartz' inequality and applications to 2D-NLS with critical nonlinearity. 
Internat. Math. Res. Notices (1998), no. 5, 253-283.
 
 
\bibitem{BKT} Bradshaw, Z., Kukavica, I., and Tsai, T.-P.,
Existence of global weak solutions to the Navier-Stokes equations in weighted spaces, to appear in
Indiana Univ. Math. J., 
arXiv preprint:1910.06929.




\bibitem{BT17} Bradshaw, Z. and Tsai, T.-P.,  Forward discretely self-similar solutions of the Navier-Stokes equations II, Ann. Henri Poincar\'e 18 (2017), no. 3, 1095-1119.

\bibitem{BT2} Bradshaw, Z. and Tsai, T.-P.,   Rotationally corrected scaling invariant solutions to the Navier-Stokes equations. Comm. Partial Differential Equations 42 (2017), no. 7, 1065-1087.

\bibitem{BT3} Bradshaw, Z. and Tsai, T.-P.,  Discretely self-similar solutions to the Navier-Stokes equations with Besov space data,  Arch. Rational Mech. Anal. 229 (2018), 53-77.

\bibitem{BT5} Bradshaw, Z. and Tsai, T.-P., Discretely self-similar solutions to the Navier-Stokes equations with data in $L^2_{\mathrm{\loc}}$ satisfying the local energy inequality. 
Analysis and PDE 12 (2019), no. 8, 1943-1962.


\bibitem{BT19} Bradshaw, Z. and Tsai, T.-P.,
Global existence, regularity, and uniqueness of infinite energy solutions to the Navier-Stokes equations, 
 Comm.~Partial Differential Equations,  https://doi.org/10.1080/03605302.2020.1761386.
%


\bibitem{BT20} Bradshaw, Z. and Tsai, T.-P.,
On the local pressure expansion for the Navier-Stokes equations, arXiv:2001.11526


\bibitem{CKN} Caffarelli, L., Kohn, R., and Nirenberg, L., Partial regularity of suitable weak solutions of the Navier-Stokes equations. Comm. Pure Appl. Math. 35 (1982), no. 6, 771-831.

\bibitem{Chae-Wolf} Chae, D. and Wolf, J.,    Existence of discretely self-similar solutions to the Navier-Stokes equations for initial value in $L^2_{\mathrm{\loc}}(\R^3)$,    Ann. I. H. Poincar\'e - AN.  
https://doi.org/10.1016/j.anihpc.2017.10.001

\bibitem{DL} D'Ancona, P. and Luc\'a R., On the regularity set and angular integrability for the Navier-Stokes equation. Arch. Ration. Mech. Anal. 221 (2016), no. 3, 1255--1284.


\bibitem{FJR} Fabes, E.\,B., Jones, B.\,F., and Rivi{\`e}re, N.\,M., The initial value  problem for the Navier-Stokes equations with data in $L^{p}$, Arch.  Rational Mech. Anal. 45 (1972), 222--240.

%
%
\bibitem{FL} Fern\'andez-Dalgo, P.\,G. and Lemari\'e-Rieusset, P.\,G.,
Weak solutions for Navier-Stokes equations with initial data in weighted $L^2$ spaces,
to appear in Arch.  Rational Mech. Anal., 
arXiv preprint:1906.11038.

%
%
%

\bibitem{GM}
Giga, Y. and  Miyakawa, T.,
Solutions in $L^r$ of the Navier-Stokes initial value problem.
Arch. Rational Mech. Anal. 89 (1985), 267-281.

\bibitem{GM89}
Giga, Y. and  Miyakawa, T.,
Navier-Stokes flow in $R^3$ with measures as initial vorticity and Morrey spaces. 
Comm. Partial Differential Equations 14 (1989), no. 5, 577-618.


%
%
%

%

%


\bibitem{Hopf} Hopf, E., \"Uber die Aufgangswertaufgave f\"ur die hydrodynamischen
Grundliechungen. Math. Nachr. 4 (1951), 213-231.

\bibitem{JS13} Jia, H. and \v Sver\'ak, V., Minimal $L^3$-initial data for potential Navier-Stokes singularities. SIAM J. Math. Anal. 45 (2013), no. 3, 1448-1459.

\bibitem{JS} Jia, H. and \v Sver\'ak, V., Local-in-space estimates near initial time for weak solutions of the Navier-Stokes equations and forward self-similar solutions. Invent. Math. 196 (2014), no. 1, 233-265.

%

\bibitem{JST} Jia, H., {\v Sver\'ak}, V., and Tsai, T.-P., Self-similar solutions to the nonstationary Navier-Stokes equations,  Handbook of Mathematical Analysis in Mechanics of Viscous Fluids, 461-507,  Springer, 2018.

%


\bibitem{KMT20}  Kang, K., Miura, H., and Tsai, T.-P., {Short time regularity of Navier-Stokes
flows
with locally $L^3$ initial data and applications}, Int. Math. Res. Not. rnz327 https://doi.org/10.1093/imrn/rnz327
 arXiv preprint:1812.10509.

\bibitem{Kato84} Kato, T., Strong $L^p$-solutions of the Navier-Stokes equation in $R^m$, with applications to weak solutions, Math. Z., 187 (1984), pp. 471-480.

\bibitem{Kato92}
Kato, T., Strong solutions of the Navier-Stokes equation in Morrey spaces. Bol. Soc.
Brasil. Mat. (N.S.) 22 (1992), no. 2, 127-155.

\bibitem{KPV}
 Kenig, C., Ponce, G., and Vega, L., On the concentration of blow up solutions for the generalized KdV equation 
critical in $L^2$. Nonlinear wave equations (Providence, RI, 1998), 131-156, Contemp. Math., 263, Amer. Math. Soc., Providence, RI, 2000.

\bibitem{KS} Kikuchi, N. and Seregin, G., Weak solutions to the Cauchy problem for the Navier-Stokes equations satisfying the local energy inequality. Nonlinear equations and spectral theory, 141-164, Amer. Math. Soc. Transl. Ser. 2, 220, Amer. Math. Soc., Providence, RI, 2007.

\bibitem{KT-SSHS} Korobkov, M. and Tsai, T.-P., Forward self-similar solutions of the Navier-Stokes equations in the half space, Anal. PDE 9-8 (2016), 1811--1827.

\bibitem{KT} Kwon, H. and Tsai, T.-P., Global Navier-Stokes flows for non-decaying initial data with slowly decaying oscillation,
 Comm. Math. Phys. 375, 1665-1715 (2020). 
 %
 %

\bibitem{LR} Lemari\'e-Rieusset, P. G., \emph{Recent developments in the Navier-Stokes problem.} Chapman Hall/CRC Research Notes in Mathematics, 431. Chapman Hall/CRC, Boca Raton, FL, 2002.
%

\bibitem{LR07} Lemari\'e-Rieusset, P. G., The Navier-Stokes equations in the critical Morrey-Campanato space. Rev. Mat. Iberoam. 23 (2007), no. 3, 897-930.


\bibitem{LR2} Lemari\'e-Rieusset, P. G.,  \emph{The Navier-Stokes problem in the 21st century}. CRC Press, Boca Raton, FL, 2016.

\bibitem{Leray} Leray, J., Sur le mouvement d'un liquide visqueux emplissant l'espace. (French) Acta Math. 63 (1934), no. 1, 193-248.


\bibitem{LOW} Li, K., Ozawa, T., and Wang, B., Dynamical behavior for the solutions of the Navier-Stokes equation. Comm. Pure Appl. Anal. 17 (2018), no. 4, 1511-1560.

%
%
%


\bibitem{Lin}
Lin, F., A new proof of the Caffarelli-Kohn-Nirenberg theorem, Comm. Pure Appl. Math. 51 (1998), no. 3, 241-257.

%

\bibitem{MMP} Maekawa, Y., Miura, H., and Prange, C., Local energy weak solutions for the Navier-Stokes equations in the half-space, Comm. Math. Phys. 367 (2019), no. 2, 517--580.

\bibitem{MMP2} Maekawa, Y., Miura, H., and Prange, C., 
Estimates for the Navier-Stokes equations in the half-space for non localized data, 
 Anal. PDE 13 (2020) 945-1010.


\bibitem{MT} Merle, T. and Tsutsumi, Y.,
$L^2$ concentration of blow-up solutions for the nonlinear Schr\"odinger equation with critical power nonlinearity. J. Differential Equations 84 (1990), no. 2, 205-214.


%

\bibitem{NRS} Ne\v cas, J., {R\accent23 u\v {z}i\v {c}ka}, M., and {\v Sver\'ak}, V., On Leray's self-similar solutions of the Navier-Stokes equations, Acta Math. 176 (1996), 283--294.


\bibitem{Scheffer77}
Scheffer, V., Hausdorff measure and the Navier-Stokes equations. Comm. Math. Phys. 55 (1977), no. 2, 97-112.

\bibitem{Scheffer80}
Scheffer, V., The Navier-Stokes equations on a bounded domain. Comm. Math. Phys.
73 (1980), no. 1, 1-42.
\bibitem{Tao}
Tao, T., Localisation and compactness properties of the Navier-Stokes global regularity problem. Anal. PDE 6 (2013), no. 1, 25-107.
%
%
\bibitem{Taylor}
Taylor, M.\,E., Analysis on Morrey spaces and applications to Navier-Stokes and other
evolution equations. Comm. Partial Differential Equations 17 (1992), no. 9-10, 1407-1456.

%
%
%

%

\bibitem{Tsai}Tsai, T.-P., Forward discretely self-similar solutions of the Navier-Stokes equations. Comm. Math. Phys. 328 (2014), no. 1, 29-44.

%

\bibitem{Weissler} Weissler, F. B., \emph{The {N}avier-{S}tokes initial value problem in  {$L^{p}$}}, Arch. Rational Mech. Anal. 74 (1980), no.~3, 219--230.

\bibitem{ZS} Zajaczkowski, W. and Seregin, G., A sufficient condition of local regularity for the
Navier-Stokes equations. Zap. Nauchn. Sem. S.-Peterburg. Otdel. Mat. Inst. Steklov.
(POMI), 336 (2006), 46-54.



\end{thebibliography}
\end{document}